\newtheorem{theorem}[subsection]{Theorem}
\newtheorem{definition}[subsection]{Definition}
\newtheorem{lemma}[subsection]{Lemma}
\newtheorem{remark}[subsection]{Remark}
\newtheorem{proposition}[subsection]{Proposition}
\newtheorem{corollary}[subsection]{Corollary}
\newtheorem*{claim*}{Claim}
\newtheorem*{theorem*}{Theorem}
\def\bal{\begin{aligned}}
\def\eal{\end{aligned}}
\def\be{\begin{equation}\label}
\def\ee{\end{equation}}
\def\bcs{\begin{cases}}
\def\ecs{\end{cases}}
\def\={\;=\;}
\def\+{\,+\,}
\def\-{\,-\,}
\def\Z{{\mathbb Z}}
\def\R{{\mathbb R}}
\def\F{{\mathbb F}}
\def\lb{\llbracket}
\def\rb{\rrbracket}
\def\ord{\mathrm{ord}}
\def\sR{\mathcal{R}}
\def\sS{\mathcal{S}}
\newcommand*\Bell{\ensuremath{\boldsymbol\ell}}
\def\cartier{\mathscr{C}_p}
\def\v#1{{\bf #1}}
\def\is{\equiv}
\def\mod#1{({\rm mod}\ #1)}
\title{Dwork crystals II}
\author{Frits Beukers, Masha Vlasenko}
\address{Utrecht University }
\email{f.beukers@uu.nl}
\address{Institute of Mathematics of the Polish Academy of Sciences}
\email{m.vlasenko@impan.pl}
\thanks{
Work of Frits Beukers was supported by the Netherlands Organisation for Scientific Research (NWO), grant TOP1EW.15.313. Work of Masha Vlasenko was supported by the National Science Centre of Poland (NCN), grant UMO-2016/21/B/ST1/03084.}
\begin{document}
\maketitle

\section{Introduction}

This paper is a continuation of~\cite{DC-I}, which we will refer to as Part I. 

In Part I we considered $p$-adic limit formulas to matrices of the so-called
Cartier action. As an example, consider the elliptic curve $f(x,y)=y^2-x(x-1)(x-z)=0$.
Let $G_m(z)$ be the coefficient of $(xy)^{m-1}$ in $f(x,y)^{m-1}$. Let $z_0\in\Z_p$ and we denote its residue modulo $p$ by $\overline{z}_0 \in \F_p$. Then it was
shown in Part I that, if $G_p(\overline{z}_0) \ne 0$ the quotients $G_{p^s}(z_0)/G_{p^{s-1}}(z_0)$
form a $p$-adic Cauchy sequence tending to the unit root $\lambda(\overline{z}_0) \in \Z_p^\times$  of the zeta function of
$y^2=x(x-1)(x-\overline{z}_0)$ as $s\to\infty$. Furthermore, when $z$ is a variable, the quotients
$G_{p^s}(z)/G_{p^{s-1}}(z^p)$ form a $p$-adic Cauchy sequence as $s\to\infty$. The limit of this sequence can be identified as $(-1)^{\frac{p-1}2} F(z)/F(z^p)$, where $F(z)$ denotes the hypergeometric function $F(1/2,1/2,1|z) = \sum_{k=0}^{\infty}\frac{(1/2)_k^2}{k!^2}z^k$. This computation was done in Example~5.5 of Part I. It then follows from the results in Part~I that the ratio $F(z)/F(z^p) \in \Z_p\lb z\rb$ can be approximated $p$-adically by rational functions whose denominators are powers of $G_p(z)$. This property was observed earlier by Bernard Dwork, who used a different kind of $p$-adic approximation,~\cite{dwork63,dwork69}. In this particular case, we can show (Remark~\ref{dworkremark1}) that 
\be{DC-original}
F(z)/F(z^p) \equiv F_{p^s}(z) / F_{p^{s-1}}(z^p) \; \mod {p^s},
\ee 
where $F_m(z)=\sum_{k=0}^{m-1}\frac{(1/2)_k^2}{k!^2}z^k$ are truncations of $F(z)$. This congruence is a version of \cite[(12)]{dwork63}.

Here $F_p(z) \equiv G_p(z) \mod p$. We will also see that if $z_0\in\Z_p$ and $G_p(\overline{z}_0) \ne 0$, the sequence $(-1)^{\frac{p-1}2} F_{p^s}(z_0)/F_{p^{s-1}}(z_0)$
tends to  the unit root $\lambda(\overline{z}_0)$. This is clarified in Remark \ref{dworkremark2}.

In this paper we will give a vast generalization and explain the underlying mechanism of congruences of the above type. For a generic Laurent polynomial $f$, it turns out that the corresponding
generalization of $F(z),F_m(z)$ is given by A-hypergeometric series and their
truncations.

We now recall the notations and definitions from Part I.

Let $p$ be a prime and $R$ a $p$-adically complete characteristic zero domain such that $\cap_s p^s R=\{ 0 \}$. Let $f \in R[x_1^{\pm1},\ldots,x_n^{\pm 1}]$ be a Laurent polynomial and $\Delta \subset \R^n$ be its Newton polytope. A subset $\mu \subset \Delta$ is said to be \emph{open} if its complement $\Delta \setminus \mu$ is a union of faces of any dimensions. For such a subset we consider the $R$-module of rational functions
\[
\Omega_f(\mu) = \left\{ (k-1)!\frac{g(\v x)}{f(\v x)^k} \;\Big|\; k \ge 1, g \in R[x_1^{\pm1},\ldots,x_n^{\pm 1}],{\rm supp}(g) \subset k \mu  \right\}.
\]
When $\mu=\Delta$ we tend to omit it from the notation, e.g. $\Omega_f(\Delta)$ is simply $\Omega_f$. The submodule of derivatives $d\Omega_f \subset \Omega_f$ is defined as the $R$-span of all $x_i\frac{\partial}
{\partial x_i} \omega$ with $\omega \in \Omega_f$ and $1 \le i \le n$.
In Part I we constructed, for every Frobenius lift $\sigma$ on $R$, an $R$-linear Cartier operator on the $p$-adic completions
\[
\cartier: \widehat \Omega_f(\mu) \to \widehat \Omega_{f^\sigma}(\mu).
\] 
This operator commutes with the derivations of $R$ and satisfies $\cartier \circ x_i\frac{\partial}
{\partial x_i} = p \, x_i\frac{\partial} {\partial x_i} \circ \cartier$ for $1 \le i \le n$. It is then immediate that the Cartier operator preserves $d \Omega_f$. We consider submodules
\[
U_f(\mu)=\{\omega\in\widehat\Omega_f(\mu) \;|\; \cartier^s(\omega)\is0 \; \mod{p^s \widehat \Omega_{f^{\sigma^s}}(\mu)} \ \mbox{for all $s\ge1$}\}.
\]
It follows from the above mentioned commutation relations that $d\Omega_f \cap \Omega_f(\mu) \subset U_f(\mu)$. Denote by $\mu_\Z = \mu \cap \Z^n$ the set of integral points in $\mu$. The main result of Part~I states that if the \emph{Hasse--Witt matrix}
\[
\beta_p(\mu) = \left(\text{ coeffcient of } \v x^{p \v v - \v u}\text{ in } f(\v x)^{p-1} \right)_{\v u, \v v \in \mu_\Z}
\]
is invertible then the quotient
\[
Q_f(\mu) = \widehat \Omega_f(\mu) / U_f(\mu)
\]
is a free $R$-module of rank $h=\# \mu_\Z$ where the images of
\[
\omega_{\v u} = \frac{\v x^\v u}{f(\v x)}, \qquad \v u \in \mu_\Z
\]
can be taken as a basis. In this case, for every Frobenius lift $\sigma$ and every derivation $\delta$ on $R$ we define matrices $\Lambda_\sigma, N_\delta \in R^{h \times h}$ by the conditions
\[\bal
\cartier (\omega_\v u) &\is \sum_{\v v \in \mu_\Z} (\Lambda_\sigma)_{\v u, \v v} \; \omega_{\v v}^\sigma \; \mod {U_{f^\sigma}(\mu)}, \\
\delta (\omega_\v u) &\is \sum_{\v v \in \mu_\Z} (N_\delta)_{\v u, \v v} \; \omega_{\v v} \; \mod {U_f(\mu)}.
\eal\]
One has $\Lambda_\sigma \is \beta_p(\mu) \, \mod p$ and hence $\cartier: Q_f(\mu) \to Q_{f^\sigma}(\mu)$ is invertible. In this paper we shall give explicit formulas for the matrices $\Lambda_\sigma, N_\delta$ in a number of situations. One $p$-adic approximation was already given in Part I:
\be{unit-root-p-adic-limits}\bal
&\Lambda_\sigma \is \beta_{p^s}(\mu) \cdot \sigma\left(\beta_{p^{s-1}}(\mu)\right)^{-1}  \; \mod {p^s},\\ 
& N_\delta \is \delta\left(\beta_{p^{s}}(\mu)\right)\cdot \beta_{p^s}(\mu)^{-1} \; \mod {p^s},
\eal\ee
where $\beta_m(\mu) \in R^{h \times h}$ is given by the same formula as the above 
Hasse--Witt matrix with $p$ replaced by a positive integer $m$.  

\bigskip
Let us say that a formal series $q(t) = \sum_{k \ge 0} b_k t^k \in \Z_p[[t]]$ with $b_0 = 1$ satisfies \emph{Dwork's congruences} if one has
\[
\frac{q(t)}{q(t^p)} \equiv \frac{\sum_{k= 0}^{p^s-1} b_k t^k}{ \sum_{k= 0}^{p^{s-1}-1} b_k t^{pk} } \; {\rm mod} \; p^s \Z_p[[t]] 
\]
for every $s \ge 1$. In~\cite{dwork69} Dwork proved this congruence for a class of hypergeometric series. His result was generalized in~\cite{MeV16} for the generating series of sequences
\[
b_k = \mbox{ constant term of } g(\v x)^k,
\]
where $g(\v x)$ is a multivariable Laurent polynomial such that its Newton polytope $\Delta$ contains $\v 0$ as its only internal integral point. In Sections~\ref{sec:periods},~\ref{sec:MeV-example} and \ref{sec:truncations} we shall apply our methods to give an alternative proof of the main result of~\cite{MeV16}. Namely, with $f(\v x)= 1 - t g(\v x)$ and $\mu = \Delta^\circ$ the module $Q_f(\mu)$ has rank~1 and we will see that $\Lambda_\sigma = q(t)/q(t^p)$. Dwork's congruence then follows from a $p$-adic approximation similar to~\eqref{unit-root-p-adic-limits}, where $\beta_{p^s}=\sum_{k=0}^{p^s-1} (-1)^k \binom{p^s-1}{k} b_k t^k$ are substituted with the truncations $\gamma_{p^s}=\sum_{k=0}^{p^s-1} b_k t^k$. In Section~\ref{sec:truncations} we explore the relation between truncations and \emph{periods modulo $m$} used in Part I; this relation is the key fact in our proof of Dwork's congruences. The main result of this paper is Theorem~\ref{main5}. It generalizes  Dwork's congruences to the A-hypergeometric setting. 

\bigskip
At the end of this introduction we would like to recall a detail from Part I which will be also useful for us here. When there is a vertex $\v b \in \Delta$ such that the coefficient of $f$ at $\v b$ is a unit in $R$, one can give the following description of our Cartier operator. By expanding rational functions into formal power series  supported in the cone $C(\Delta - \v b)$, we embed $\Omega_f$ into $\Omega_{\rm formal} = \{\sum_{\v k \in C(\Delta-\v b)} a_{\v k} \v x^\v k \;|\; a_{\v k} \in R\}$. The Cartier operation on formal expansions is simply given by
\[
\cartier: \sum_{\v k} a_{\v k} \v x^\v k \mapsto \sum_{\v k} a_{p \v k} \v x^\v k
\] 
and $U_f(\mu)$ coincides with the submodule of formal derivatives $\widehat \Omega_f(\mu) \cap d\Omega_{\rm formal}$, see \cite[Proposition 4.2]{DC-I}. 

\section{Periods}\label{sec:periods}

In Part I we introduced the Cartier operator as operator on infinite Laurent series.
However, the image of a rational function under the Cartier operator is again rational.
Consider the rational function $\omega=\frac{g(\v x)}{f(\v x)^k}\in\Omega_f$. We assert that the image of
$\omega$ under $\cartier$ is given by
\[
\frac{1}{p^n}\sum_{\v y:\v y^p=\v x}\frac{g(\v y)}{f(\v y)^k},
\]
where the summation is over all $\v y=(\zeta_p^{r_1}x_1^{1/p},\ldots,\zeta_p^{r_n}x_n^{1/p})$
with $0\le r_1,\ldots,r_n<p$, with $\zeta_p$ a primitive $p$-th root of unity.
This is again a rational function, but with denominator $\prod_{\v y:\v y^p=\v x}f(\v y)^k$.
Choose a vertex $\v b$ of the Newton polytope $\Delta$ of $f$ and expand in a Laurent series
with respect to $\v x^{\v b}$. The result is a Laurent series with support in the
cone $C(\Delta-\v b)$. Suppose it reads $\sum_{\v k}a_{\v k}\v x^{\v k}$. Then application
of $\cartier$ yields
\[
\cartier(\omega)= \frac{1}{p^n} \sum_\v k a_\v k \;\Bigl(\;
\sum_{r_1,\ldots,r_n=0}^{p-1}\zeta_p^{r_1k_1+\cdots+r_nk_n} \Bigr)\; \v x^{\v k/p}.
\]
The summation over 
the integers $r_1,\ldots,r_n$ yields something non-zero if and only if $p$ divides $k_i$
for $i=1,\ldots,n$. The summation value then equals $p^n$. Replacing $\v k$ by $p\v k$
then yields
\[
\cartier(\omega)=\sum_{\v k}a_{p\v k}\v x^{\v k},
\]
which is precisely the Cartier operator defined in Part I. 

There are also other ways to produce Laurent series expansions of $\omega$. This
happens in the case when $R$ has another non-archimedean valuation, let us call it
the $t$-adic valuation, and one coefficient of $f$ that dominates all the others $t$-adically.
So let us write $f = \sum_{\v w \in \Delta_\Z} v_{\v w} \v x^\v w$ and suppose that 
there exists $\v v$
such that $v_\v v$ is a unit in $R$ and $|v_{\v v}|_t>|v_\v w|_t$ for all $\v w \ne \v v$.  
We can then expand $\omega$ in a 
$t$-adically converging Laurent series via
\begin{eqnarray}\label{series-development}
\omega&=&\frac{g(\v x)}
{\left(v_\v v\v x^{\v v}+\sum_{\v w \ne \v v}v_{\v w}\v x^{\v w}\right)^k} = \frac{g(\v x) \v x^{-k \v v}}
{v_{\v v}^k\left(1+\sum_{\v w \ne \v v}(v_{\v w}/v_{\v v})\v x^{\v w-\v v}\right)^k}\\
&=&\frac{1}{v_{\v v}^k} g(\v x) \v x^{-k \v v} \sum_{r\ge 0}{-k \choose r}
\left(\sum_{\v w\ne \v v}(v_\v w/v_\v v)\v x^{\v w-\v v}\right)^r.
\end{eqnarray}
The series expansion is $t$-adically convergent, but when $\v v$ is not a vertex of $\Delta$
we may end up with a Laurent series
in $\v x$ whose support is not a cone. It could possibly be all of $\Z^n$.
The coefficients are then
in the completion of $R$ with respect to $|.|_t$. We denote this completion by $S$ and assume that $v_{\v v} \in S^\times$. Suppose we get
\[
\omega=\sum_{\v k\in\Z^n}c_{\v k}\v x^{\v k}, \quad c_\v k \in S.
\]
Assuming that for $v_1,v_2 \in R$ inequality $|v_1|_t > |v_2|_t$ implies 
$|\sigma(v_1)|_t > |\sigma(v_2)|_t$, one can do analogous expansion in 
$\Omega_{f^\sigma}$. Then, the same argument as above yields
\[
\cartier(\omega)=\sum_{\v k\in\Z^n}c_{p\v k}\v x^{\v k}.
\]

\begin{definition}\label{p-v-period-map}
Let $\v v\in\Delta_\Z$ be such that $|v_{\v v}|_t>|v_{\v w}|_t$ for all $\v w\in\Delta$
distinct from $\v v$ and $v_\v v \in S^\times$. Then define the period map $p_{\v v}:\Omega_f\to S$ given by $p_{\v v}(\omega) = c_\v 0$, the constant term in the Laurent series expansion of $\omega$
with respect to $\v v$.
\end{definition}

For a differential ring $S$ with a homomorphism $R \to S$ which extends the derivations of $R$,
 a \emph{period map} is an $R$-linear map $p: \Omega_f \to S$ which vanishes on 
 $d \Omega_f$ and commutes with derivations of $R$. Values of a period map on elements 
 of $\Omega_f$ are called \emph{periods}. All period maps considered in this paper satisfy 
 an extra condition of vanishing on the submodule of formal derivatives
 $U_f = \Omega_f \cap d\Omega_{\rm formal}$.

It follows almost from the definition that $p_{\v v}$ vanishes on $d\Omega_f$. 
It is slightly less trivial to see that $p_{\v v}$ vanishes on the formal derivatives.

\begin{proposition}\label{vanish-on-Uf}
Let notation be as above. Then, for all $\eta\in U_f$ we have $p_{\v v}(\eta)=0$.    
\end{proposition}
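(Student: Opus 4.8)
The plan is to reduce the statement to the case of a single monomial formal derivative and then show that the constant term of its $\v v$-adic Laurent expansion vanishes. Recall from the discussion preceding the proposition that $U_f = \Omega_f \cap d\Omega_{\rm formal}$, where $d\Omega_{\rm formal}$ is the $R$-span (or rather the $S$-span, after base change) of the operators $x_i\frac{\partial}{\partial x_i}$ applied to the formal expansions $\sum_{\v k} a_{\v k}\v x^{\v k}$. So if $\eta \in U_f$, then as a formal Laurent series $\eta = \sum_{i=1}^n x_i\frac{\partial}{\partial x_i}\bigl(\sum_{\v k} a^{(i)}_{\v k}\v x^{\v k}\bigr)$ for suitable coefficients $a^{(i)}_{\v k}$. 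By linearity of $p_{\v v}$ it suffices to show that $p_{\v v}\bigl(x_i\frac{\partial}{\partial x_i}(\v x^{\v k})\bigr) = 0$ for each $i$ and each exponent $\v k$ occurring; but $x_i\frac{\partial}{\partial x_i}(\v x^{\v k}) = k_i\,\v x^{\v k}$, whose only possibly nonzero constant term occurs when $\v k = \v 0$, in which case the factor $k_i$ is zero. Hence every term contributes $0$ and $p_{\v v}(\eta) = 0$.

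The one genuine subtlety, and the step I expect to be the main obstacle, is the legitimacy of exchanging the period map with the infinite sum: the Laurent expansion of $\eta$ with respect to $\v v$ need not be supported in a cone, so one must argue that the $t$-adic convergence of the expansion is compatible with extracting the constant term term-by-term. Concretely, I would note that $p_{\v v}$ was defined precisely by \emph{reading off} the coefficient $c_{\v 0}$ from the $t$-adically convergent expansion \eqref{series-development}, so what needs checking is that the formal-derivative presentation of $\eta$ can be chosen to converge $t$-adically in the same sense, and that the coefficient of $\v x^{\v 0}$ in the sum equals the sum of the coefficients of $\v x^{\v 0}$ in the summands. This follows because, for each fixed exponent $\v k$, only finitely many of the operators $x_i\frac{\partial}{\partial x_i}$ acting on the finitely many relevant pieces can produce $\v x^{\v k}$, so the coefficient extraction is a finite operation at each fixed $\v k$ — and in particular at $\v k = \v 0$ — and hence commutes with the $t$-adic limit.

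An alternative, perhaps cleaner, route avoids manipulating the formal expansion of $\eta$ directly: since $p_{\v v}$ is built from the expansion homomorphism $\Omega_f \to S[[x_1^{\pm 1},\dots,x_n^{\pm 1}]]$ (suitably interpreted) followed by $\v k \mapsto c_{\v 0}$, and this expansion homomorphism is a ring map intertwining each $x_i\frac{\partial}{\partial x_i}$ on $\Omega_f$ with the corresponding formal operator, one has $p_{\v v}\circ x_i\frac{\partial}{\partial x_i} = \bigl[\text{coefficient of }\v x^{\v 0}\bigr]\circ x_i\frac{\partial}{\partial x_i} = 0$ as operators on formal Laurent series, because the coefficient functional annihilates the image of any $x_i\frac{\partial}{\partial x_i}$. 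Thus $p_{\v v}$ vanishes on all of $d\Omega_{\rm formal}\cap\Omega_f = U_f$, which is exactly the claim. I would present the argument in this second form, spelling out the convergence point from the previous paragraph as the one place requiring care.
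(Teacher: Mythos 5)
Your argument takes a genuinely different route from the paper's, and unfortunately it has a gap precisely at the point where the two expansions interact. The characterization $U_f = \widehat\Omega_f \cap d\Omega_{\rm formal}$ that you invoke is established (Part I, Proposition~4.2) for the Laurent expansion with respect to a \emph{vertex} $\v b$ of $\Delta$ at which the coefficient $f_{\v b}$ is a unit in $R$; there $\Omega_{\rm formal}$ consists of series supported in the cone $C(\Delta-\v b)$ with coefficients in $R$. By contrast, $p_{\v v}$ is defined by reading off the constant term of the \emph{$t$-adic} expansion with respect to $\v v$, which need not be a vertex, and which produces a Laurent series with coefficients in the completion $S$, possibly supported on all of $\Z^n$. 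These are two distinct expansion homomorphisms $\Omega_f \to (\text{Laurent series})$, converging in different regions, and their constant-term functionals do not agree in general. So knowing that the $\v b$-expansion of $\eta$ is a formal derivative tells you nothing a priori about the constant term of the $\v v$-expansion. Your second "cleaner route" has the same problem in a subtler form: the identity $p_{\v v}\circ x_i\frac{\partial}{\partial x_i}=0$ on $\Omega_f$ gives you $p_{\v v}(d\Omega_f)=0$, which the paper notes is immediate, but passing from there to $p_{\v v}(d\Omega_{\rm formal}\cap\Omega_f)=0$ requires exactly the transfer between expansions that is missing. Repairing this would amount to proving a $\v v$-expansion analogue of Part I's Proposition~4.2, which is more work than the statement deserves.

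The paper's own proof avoids all of this by working directly with the defining property of $U_f$: the Cartier formula $\cartier(\sum c_{\v k}\v x^{\v k})=\sum c_{p\v k}\v x^{\v k}$ holds for the $\v v$-expansion, so $\cartier$ preserves the constant term $c_{\v 0}$; since $\eta\in U_f$ means $\cartier^s(\eta)\in p^s\widehat\Omega_{f^{\sigma^s}}$, the constant term of $\cartier^s(\eta)$ lies in $p^s S$ for every $s$, hence $c_{\v 0}(\eta)\in\bigcap_s p^s S=\{0\}$. This is shorter, stays entirely within the $\v v$-expansion, and does not rely on the formal-derivative characterization at all. I would recommend replacing your argument with this one, or, if you want to keep the formal-derivative viewpoint, first establishing that the $\v v$-expansion also yields $U_f=\widehat\Omega_f\cap d\Omega_{\rm formal}^{\v v}$ — but be aware that this is a nontrivial claim requiring its own proof.
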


\begin{proof} First of all notice that the constant term of $\eta$ equals the constant
term of $\cartier^s(\eta)$ for all $s\ge0$. Since $\eta\in U_f$ we also know
that the $\cartier^s(\eta)\is0\mod{p^s}$. In particular the constant term of
$\eta$ is divisible by $p^s$ for all $s\ge0$, hence equals $0$. We
conclude that $p_{\v v}(\eta)=0$.
\end{proof}

\begin{theorem}\label{main3}  Let $\mu \subseteq \Delta$ be an \emph{open} set 
and $h=\# \mu_\Z$. 
Consider the column vector $\v p_{\v v} \in S^h$ with components $p_{\v v}(\omega_{\v u})$
for $\v u\in\mu_\Z$.

Assume that $R$ is $p$-adically complete and the Hasse--Witt matrix $\beta_p(\mu)$
is invertible
in $R$. For any Frobenius lift $\sigma$ and any derivation $\delta$ of $R$, 
we have 
\be{frobenius}
\v p_{\v v}=\Lambda_\sigma \; \sigma(\v p_{\v v})
\ee
and
\be{horizontality}
\delta(\v p_{\v v})= N_\delta \; \v p_{\v v}.
\ee

\end{theorem}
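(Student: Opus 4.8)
The plan is to exploit the characterization of the Cartier operator and the period maps in terms of Laurent series expansions. Fix the $t$-adic valuation and the dominant term $\v v$ of $f$, so that by the discussion preceding Definition~\ref{p-v-period-map} each $\omega \in \Omega_f$ has an expansion $\omega = \sum_{\v k \in \Z^n} c_{\v k}(\omega) \v x^{\v k}$ with $c_{\v k}(\omega) \in S$, and $\cartier$ acts on these expansions by $c_{\v k}(\cartier \omega) = c_{p\v k}(\omega)$. In particular $p_{\v v}(\omega) = c_{\v 0}(\omega)$, and a trivial but crucial observation is that $p_{\v v}(\cartier \omega) = c_{\v 0}(\cartier \omega) = c_{\v 0}(\omega) = p_{\v v}(\omega)$ for every $\omega$, and similarly $p_{\v v}(\cartier^s \omega) = p_{\v v}(\omega)$ for all $s \ge 0$.

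First I would prove \eqref{frobenius}. Recall from Part~I that $\cartier(\omega_{\v u}) \equiv \sum_{\v w \in \mu_\Z} (\Lambda_\sigma)_{\v u,\v w}\, \omega_{\v w}^\sigma \pmod{U_{f^\sigma}(\mu)}$. Apply the period map $p_{\v v}$ associated to $f^\sigma$ to this congruence: by Proposition~\ref{vanish-on-Uf} the error term in $U_{f^\sigma}(\mu) \subset U_{f^\sigma}$ is killed, and since $\Lambda_\sigma$ has entries in $R$ and $p_{\v v}$ is $R$-linear we obtain $p_{\v v}^{f^\sigma}(\cartier \omega_{\v u}) = \sum_{\v w}(\Lambda_\sigma)_{\v u,\v w}\, p_{\v v}^{f^\sigma}(\omega_{\v w}^\sigma)$. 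Now $p_{\v v}^{f^\sigma}(\omega_{\v w}^\sigma) = \sigma(p_{\v v}^f(\omega_{\v w}))$: indeed $\sigma$ transforms the expansion of $\omega_{\v w}$ for $f$ coefficient-by-coefficient into the expansion of $\omega_{\v w}^\sigma$ for $f^\sigma$, using the hypothesis that $\sigma$ preserves the $t$-adic domination inequalities (so the same $\v v$ remains dominant for $f^\sigma$). Finally, by the observation above, $p_{\v v}^{f^\sigma}(\cartier \omega_{\v u}) = p_{\v v}^{f}(\omega_{\v u})$ — here one must check that the $f$-expansion of $\omega_{\v u}$ and the $f^\sigma$-expansion of $\cartier\omega_{\v u}$ have the same constant term, which is exactly the content of $c_{\v 0}(\cartier\omega) = c_{\v 0}(\omega)$ applied inside $S$. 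Assembling these three equalities componentwise over $\v u \in \mu_\Z$ gives $\v p_{\v v} = \Lambda_\sigma\, \sigma(\v p_{\v v})$.

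For \eqref{horizontality} the argument is parallel but simpler: apply $p_{\v v}$ (for $f$ itself) to the defining congruence $\delta(\omega_{\v u}) \equiv \sum_{\v w}(N_\delta)_{\v u,\v w}\,\omega_{\v w} \pmod{U_f(\mu)}$. The right-hand side is handled by $R$-linearity together with $p_{\v v}(U_f(\mu)) = 0$, giving $p_{\v v}(\delta \omega_{\v u}) = \sum_{\v w}(N_\delta)_{\v u,\v w}\, p_{\v v}(\omega_{\v w})$. For the left-hand side one uses that $p_{\v v}$ commutes with derivations of $R$ — this is built into the definition of a period map, and for $p_{\v v}$ specifically it follows because $\delta$ acts termwise on Laurent expansions, so $\delta$ of the constant term is the constant term of $\delta\omega$. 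Hence $\delta(p_{\v v}(\omega_{\v u})) = p_{\v v}(\delta\omega_{\v u})$, and reading off components yields $\delta(\v p_{\v v}) = N_\delta\, \v p_{\v v}$.

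The main obstacle is the compatibility of the Frobenius lift $\sigma$ with the chosen Laurent expansion, i.e.\ establishing cleanly that $p_{\v v}^{f^\sigma} \circ \cartier = p_{\v v}^{f}$ and $p_{\v v}^{f^\sigma}(\omega^\sigma) = \sigma(p_{\v v}^{f}(\omega))$. This rests on three points that must be spelled out carefully: that $\sigma$ extends to $S$ and preserves the domination $|v_{\v v}|_t > |v_{\v w}|_t$ (hypothesis already in force from the preamble to Definition~\ref{p-v-period-map}), that the series \eqref{series-development} is transformed by $\sigma$ into the corresponding series for $f^\sigma$ term by term, and that taking constant terms commutes with $\cartier$ at the level of these $S$-valued expansions. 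None of these is deep, but they are the place where the hypotheses on $\sigma$ are genuinely used, and the proof should isolate them as an explicit lemma before running the two short computations above.
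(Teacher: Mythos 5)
Your proof is correct and follows essentially the same route as the paper: apply the constant-term period map to the defining congruence for $\Lambda_\sigma$ (resp.\ $N_\delta$), kill the $U_{f^\sigma}(\mu)$ error term via Proposition~\ref{vanish-on-Uf}, and use $R$-linearity together with the compatibility of the expansion with $\cartier$ and $\sigma$. The paper's version is far terser (one line: ``expand in a Laurent series with respect to $\v v$ and take the constant coefficient''), while you usefully isolate the identities $p_{\v v}^{f^\sigma}\circ\cartier = p_{\v v}^f$ and $p_{\v v}^{f^\sigma}(\omega^\sigma)=\sigma(p_{\v v}^f(\omega))$ that the paper leaves implicit.
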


\begin{proof} Consider the equality
\[
\cartier(\omega_{\v u})=\sum_{\v w\in\mu_\Z}\lambda_{\v u,\v w}\omega_{\v w}^\sigma
\mod{U_f(\mu)}.
\]
Expand all terms in a Laurent series with respect to the vertex $\v v$ and determine
the constant coefficient. Using the fact that the constant term of elements in
$U_f$ vanish (Proposition~\ref{vanish-on-Uf}) we get the first statement.
In a similar vein, starting with
\[
\delta(\omega_{\v u})\is\sum_{\v w\in\mu_\Z}\nu_{\v u,\v w}\omega_{\v w}\mod{U_f(\mu)} 
\]
we get the second statement again by taking the constant term of the Laurent series
expansions with respect to $\v v$.
\end{proof}

\section{Example}\label{sec:MeV-example}
Let $g(\v x)$ be a Laurent polynomial in $x_1,\ldots,x_n$ with coefficients in $\Z_p$.
Suppose that $\v 0$ is the only lattice point in the interior of the Newton
polytope $\Delta$ of $g$.
We introduce another variable $t$ and define $f(\v x)=1-tg(\v x)$.
We apply Theorem \ref{main3} to $f(\v x)$ with $\mu=\Delta^\circ$ and $\v u=\v v=\v 0$.
In this case $\beta_m$ has only one entry, the constant coefficient of $f(\v x)^{m-1}$.
Let $R = \Z_p[t,\beta_p(t)^{-1}]\;\widehat{}\;$ be the $p$-adic completion of
$\Z_p[t,\beta_p(t)^{-1}]$. The $t$-adic closure of $R$ is $S=\Z_p[[t]]$. 
The period 
\[
q(t):= p_{\v 0}\left( \tfrac{1}{f(\v x)}\right)
\] reads $\sum_{k\ge0}b_kt^k$ with $b_k$ equal to the constant term
of $g(\v x)^k$. Take the Frobenius lift given by $t\mapsto t^p$.
Then we obtain as a consequence of Theorem \ref{main3},

\begin{corollary}\label{cor-main3}
We have 
$
\frac{q(t)}{q(t^p)}=\Lambda$
where $\Lambda \in \Z[t,\beta_p(t)^{-1}]\;\widehat{}$ is the (single entry) matrix of the Cartier operation $\cartier: Q_f(\Delta^\circ) \to Q_{f^\sigma}(\Delta^\circ)$.
\end{corollary}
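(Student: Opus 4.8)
The plan is to apply Theorem~\ref{main3} directly in the rank-one situation and identify all the objects appearing there. Since $\v 0$ is the only interior lattice point of $\Delta$, the open set $\mu = \Delta^\circ$ has $\mu_\Z = \{\v 0\}$, so $h = 1$ and the Hasse--Witt matrix $\beta_p(\Delta^\circ)$ is the single number given by the constant term of $f(\v x)^{p-1}$; by hypothesis we have inverted it in $R$, so it is a unit and Theorem~\ref{main3} applies. The unique basis element is $\omega_{\v 0} = 1/f(\v x)$, and the period vector $\v p_{\v v}$ has the single component $p_{\v 0}(1/f(\v x)) = q(t)$.

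Next I would verify that the dominant-coefficient hypothesis of Definition~\ref{p-v-period-map} holds for the choice $\v v = \v 0$. Writing $f(\v x) = 1 - t g(\v x)$, the coefficient at $\v 0$ is $1 - t g_0$ where $g_0$ is the constant term of $g$; every other coefficient is $-t\,(\text{coefficient of }g)$, which lies in $t\Z_p[t]$, hence is strictly smaller than $1-tg_0$ in the $t$-adic valuation, and $1-tg_0 \in \Z_p[[t]]^\times = S^\times$. So $p_{\v 0}$ is well defined, and expanding $1/f(\v x) = \sum_{k\ge 0} t^k g(\v x)^k$ and extracting the constant term in $\v x$ gives exactly $p_{\v 0}(1/f(\v x)) = \sum_{k\ge 0} b_k t^k = q(t)$ with $b_k$ the constant term of $g(\v x)^k$, as asserted in the preamble to the corollary.

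Then I would simply specialize the Frobenius relation~\eqref{frobenius} to this data: with the Frobenius lift $\sigma:t\mapsto t^p$ we have $\sigma(q(t)) = q(t^p)$, and $\Lambda_\sigma$ is the $1\times 1$ matrix $\Lambda$ of $\cartier : Q_f(\Delta^\circ)\to Q_{f^\sigma}(\Delta^\circ)$. Equation~\eqref{frobenius} reads $q(t) = \Lambda \cdot q(t^p)$ in $S = \Z_p[[t]]$. Since $\beta_p(t)$ is a unit in $R$ and $\Lambda \equiv \beta_p(t) \bmod p$, the constant term of $\Lambda$ is a unit, so in particular $q(t^p)$ is not a zero divisor and we may divide, obtaining $q(t)/q(t^p) = \Lambda$, with $\Lambda \in \Z_p[t,\beta_p(t)^{-1}]\,\widehat{}$ as claimed. (For completeness one may also record that~\eqref{horizontality} gives $\delta(q(t)) = N_\delta\, q(t)$, i.e. $q$ satisfies the Picard--Fuchs equation encoded by $N_\delta$, though this is not needed for the statement.)

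There is essentially no obstacle here: the content is entirely in Theorem~\ref{main3} and in Part~I's construction of $\Lambda_\sigma$. The only point requiring a moment's care is checking that $\v 0$ qualifies as a valid choice of $\v v$ for the period map — that is, that the coefficient $1-tg_0$ genuinely $t$-adically dominates all other coefficients of $f$ and is a unit in $S$ — and that the resulting Laurent expansion of $1/f(\v x)$ really is the geometric-series expansion $\sum_k t^k g(\v x)^k$, so that its constant term matches the stated $q(t)$. Both are immediate from the shape $f = 1 - tg$.
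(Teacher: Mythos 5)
Your proof is correct and takes essentially the same route as the paper, which states the corollary as an immediate consequence of Theorem~\ref{main3} with $\mu = \Delta^\circ$, $\v v = \v 0$, and the Frobenius lift $t \mapsto t^p$; the verifications you add (that $\v 0$ is a valid choice of $\v v$ for the period map, and that the resulting expansion has constant term $q(t)$) are exactly the small checks the paper leaves implicit. The only tiny quibble is that your justification for dividing by $q(t^p)$ is roundabout: one can simply note $q(t^p) \in 1 + t^p\Z_p[[t]]$ is a unit because $b_0 = 1$, without invoking $\Lambda \equiv \beta_p(t) \bmod p$.
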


One easily checks that
\[
\beta_m(t)=\sum_{k=0}^{m-1}(-1)^k{m-1\choose k}b_kt^k.
\]
Define
\[
\gamma_m(t)=\sum_{k=0}^{m-1}b_kt^k.
\]
These can be interpreted as truncated version of the power series $q(t)$. 
In \cite{MeV16} it is shown that
\begin{theorem}[Mellit--Vlasenko, 2016]\label{MeV}
For all $s\ge1$ we have $\frac{q(t)}{q(t^p)} \equiv  \frac{\gamma_{p^s}(t)}{\gamma_{p^{s-1}}(t^p)} \mod{p^s}$. 
\end{theorem}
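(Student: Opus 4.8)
The plan is to combine Corollary~\ref{cor-main3} with the $p$-adic approximation~\eqref{unit-root-p-adic-limits} for $\Lambda_\sigma$, and then to show that in this rank-$1$ situation the Hasse--Witt truncations $\beta_{p^s}(t)$ may be replaced by the naive truncations $\gamma_{p^s}(t)$ without changing the congruence class modulo $p^s$. By Corollary~\ref{cor-main3} we have $q(t)/q(t^p) = \Lambda$, and~\eqref{unit-root-p-adic-limits} applied with $\mu = \Delta^\circ$ and the Frobenius lift $t \mapsto t^p$ gives
\[
\Lambda \equiv \beta_{p^s}(t) \, \beta_{p^{s-1}}(t^p)^{-1} \mod{p^s}.
\]
So it suffices to prove the two congruences
\be{trunc-swap}
\beta_{p^s}(t) \equiv \gamma_{p^s}(t) \mod{p^s \Z_p[[t]]}, \qquad s \ge 1,
\ee
together with the analogous statement after the substitution $t \mapsto t^p$ (which is automatic once~\eqref{trunc-swap} is known, since substituting $t^p$ for $t$ preserves congruences mod $p^s$), and to check that $\gamma_{p^{s-1}}(t^p)$ is a unit in $\Z_p[[t]]$ so the quotient makes sense --- this holds because $b_0 = 1$, so $\gamma_{p^{s-1}}(t^p) \equiv 1 \mod{t}$.

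The heart of the matter is therefore~\eqref{trunc-swap}. Writing out the difference,
\[
\gamma_{p^s}(t) - \beta_{p^s}(t) = \sum_{k=0}^{p^s-1} \left( 1 - (-1)^k \binom{p^s-1}{k} \right) b_k \, t^k,
\]
and since every $b_k \in \Z_p$, it is enough to show $(-1)^k \binom{p^s-1}{k} \equiv 1 \mod{p^s}$ for all $0 \le k \le p^s-1$. This is a classical congruence: from $\binom{p^s-1}{k} = \prod_{j=1}^{k} \frac{p^s - j}{j}$ one gets $\binom{p^s-1}{k} \equiv \prod_{j=1}^{k} \frac{-j}{j} = (-1)^k \mod{p^s}$, because each factor $\frac{p^s-j}{j}$ differs from $-1$ by $\frac{p^s}{j}$ and, after clearing the (bounded) $p$-power in the denominators across the whole product using the integrality of the binomial coefficient, the error lies in $p^s \Z_p$. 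I would present this as a short self-contained lemma, being slightly careful about the denominators $j$: the cleanest route is to argue by induction on $k$, using $\binom{p^s-1}{k} = \binom{p^s-1}{k-1} \cdot \frac{p^s-k}{k}$ and the identity $k\binom{p^s-1}{k} = (p^s-k)\binom{p^s-1}{k-1}$ in $\Z$, so that no division is needed at all: if $\binom{p^s-1}{k-1} \equiv (-1)^{k-1} \mod{p^s}$ then $k\binom{p^s-1}{k} \equiv -k(-1)^{k-1} = k(-1)^k \mod{p^s}$, and one concludes after observing this determines $\binom{p^s-1}{k} \mod{p^s/\gcd(k,p^s)}$ and patching the residual $p$-adic digit.

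The main obstacle, such as it is, is the bookkeeping with the denominator $k$ in the last step: dividing a congruence mod $p^s$ by $k$ only yields a congruence mod $p^s/p^{v_p(k)}$, so a direct induction needs a small extra argument to recover the full modulus $p^s$. The neatest fix I expect to use is to avoid the issue entirely by working with the polynomial identity $(1+T)^{p^s} \equiv (1+T^{p^s}) \mod{p}$ lifted appropriately, or more simply by noting $\sum_{k=0}^{p^s-1}(-1)^k\binom{p^s-1}{k}T^k = (1-T)^{p^s-1}$ and comparing with $\frac{1-T^{p^s}}{1-T} = \sum_{k=0}^{p^s-1} T^k$ via $(1-T)^{p^s} \equiv 1 - T^{p^s} \mod{p}$ — but to get mod $p^s$ rather than mod $p$ one iterates: $(1-T)^{p^s} = ((1-T)^p)^{p^{s-1}}$ and $(1-T)^p = 1 - T^p + p\,h(T)$ with $h \in \Z[T]$, so raising to the $p^{s-1}$ and expanding shows $(1-T)^{p^s} \equiv (1-T^p)^{p^{s-1}} + p^s(\cdots) \mod{\text{higher }p\text{-powers}}$; induction on $s$ then gives $(1-T)^{p^s} \equiv 1 - T^{p^s} \mod{p^s}$ as an identity of polynomials, whence $(1-T)^{p^s-1} \equiv \frac{1-T^{p^s}}{1-T} = \sum_{k=0}^{p^s-1} T^k \mod{p^s}$. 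Comparing coefficients of $T^k$ yields $(-1)^k\binom{p^s-1}{k} \equiv 1 \mod{p^s}$, and substituting $T = 0$-free the congruence~\eqref{trunc-swap} follows immediately upon multiplying by $b_k t^k$ and summing. The theorem then drops out by substituting into the displayed approximation for $\Lambda$.
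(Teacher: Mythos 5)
Your proposal rests on the claimed congruence $\beta_{p^s}(t)\equiv\gamma_{p^s}(t)\mod{p^s}$, i.e.\ on $(-1)^k\binom{p^s-1}{k}\equiv 1\mod{p^s}$ for $0\le k\le p^s-1$. This is \emph{false} for $s\ge2$. Take $p=2$, $s=2$, $k=2$: then $(-1)^2\binom{3}{2}=3\not\equiv1\mod{4}$. Equivalently, as polynomials,
\[
(1-T)^{3}-\sum_{k=0}^{3}T^k=-4T+2T^2-2T^3,
\]
which is not divisible by $4$. The iterated argument you sketch also breaks down at exactly this point: from $(1-T)^p=1-T^p+p\,h(T)$ one does get $(1-T)^{p^s}\equiv(1-T^p)^{p^{s-1}}\mod{p^s}$, but the induction hypothesis $(1-T^p)^{p^{s-1}}\equiv1-T^{p^s}\mod{p^{s-1}}$ only delivers $\mod{p^{s-1}}$, so composing the two steps loses a power of $p$. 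In the end one obtains only $(1-T)^{p^s-1}\equiv\sum_{k=0}^{p^s-1}T^k\mod{p}$, which is precisely Lemma~\ref{beta-gamma-mod-p} and is not enough. So the two families of truncations $\beta_{p^s}$ and $\gamma_{p^s}$ are genuinely \emph{not} congruent $\mod{p^s}$; only their ratios are congruent to $\Lambda$, and that is the content of the theorem, not a consequence of swapping numerator and denominator separately. (You would also need $\beta_{p^{s-1}}(t^p)\equiv\gamma_{p^{s-1}}(t^p)\mod{p^s}$, one power \emph{higher} than the index, which is even further from being true.)

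The paper does not attempt to compare $\gamma_{p^s}$ with $\beta_{p^s}$ directly. Instead, it observes that the $\gamma_m$ themselves arise as matrices of period maps mod $m$ (they are $\rho_{m,\phi_{\v v}}-\rho_{m,\phi_{\v v}-f/\v x^{\v v}}$ with $\phi=1$), and therefore Theorem~\ref{main35} / Corollary~\ref{main4} applies to them \emph{in the same way} as to the $\beta_m$. This gives $\gamma_{p^s}(t)\equiv\Lambda\,\gamma_{p^{s-1}}(t^p)\mod{p^s}$ directly, with $\Lambda$ the same Cartier matrix as in Corollary~\ref{cor-main3}, hence $\Lambda=q(t)/q(t^p)$, and one is done. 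If you want to salvage your plan, the fix is not a sharper binomial congruence (none exists); it is to run the period-mod-$m$ machinery for the $\gamma$'s instead of trying to reduce them to the $\beta$'s.
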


Note that Theorem \ref{MeV} with $\gamma_m$ replaced by $\beta_m$
is simply Corollary \ref{cor-main3}. We shall prove Theorem \ref{MeV} in the next section. It will follow from our proof that in fact 
\be{DC-any-m}
\frac{q(t)}{q(t^p)} \is  \frac{\gamma_{m}(t)}{\gamma_{m/p}(t^p)} \mod{p^{\ord_p(m)}}
\ee
with any $m \ge 1$, and a similar congruence holds for the derivatives: 
\[
\frac{q'(t)}{q(t)} \is \frac{\gamma_{m}'(t)}{\gamma_{m}(t)} \; \mod{p^{\ord_p(m)}}.
\]
It is a curious fact that when $g(\v x)$  has coefficients in $\Z$ then the series $q'(t)q(t)^{-1} \in \Z[[t]]$ is a $p$-adic analytic element for each $p$.

\begin{remark}\label{dworkremark1}
Theorem~\ref{MeV} is a generalization of the famous congruence of Dwork~\cite[(12)]{dwork63}. The latter can be obtained using $g(\v x)=\frac14(x+1/x)(y+1/y)$. In `$p$-adic cycles' Dwork also proved a generalization of his congruence for a class of hypergeometric functions (see~\cite[\S 1, Corollary 2 and \S 2, Theorem 2]{dwork69}).

In that particular 
case the constant term of $g(\v x)^k$ equals ${k\choose k/2}^2 4^{-k}$ if $k$ is even and $0$
if $k$ is odd. Thus we get 
$$q(t)=\sum_{k\ge0}{2k\choose k}^2 (t/4)^{2k}=F(1/2,1/2,1|t^2).$$
Application of Theorem \ref{main3} and Corollary~\ref{cor-main3} now shows that $F(t^2)/F(t^{2p})$, hence
$F(t)/F(t^p)$, is a $p$-adic analytic element.
Here $F(t)$ is the hypergeometric function $F(1/2,1/2,1|t)$. One can put $m=2 p^s$ in~\eqref{DC-any-m} to obtain congruence~\eqref{DC-original} mentioned in the Introduction.
\end{remark}

\section{Truncations}\label{sec:truncations}

In this section we consider periods mod $m$ which, in a number of relevant
cases, turn out to be truncations of the Laurent series solutions of a system of
linear differential equations. But first we turn to general $f(\v x)$ with coefficients in
a $p$-adic ring $R$.

By a \emph{period map mod $m$} we mean an $R$-linear map $\rho: \Omega_f \to R$ such that $\rho(d\Omega_f) \subset m R$ and $\rho \circ \delta \equiv \delta \circ \rho \mod {m R}$ for every derivation $\delta$ on $R$. 
 All period maps mod $m$ considered in this paper will satisfy the condition $\rho(U_f) \subset mR$ of "vanishing" on formal derivatives.   

Choose a vertex $\v b\in\Delta$ and consider Laurent series expansions with respect 
to $\v b$. We assume its coefficient $f_{\v b}$ in $f$ to be a unit in $R$.
For an integer $m \ge 1$ and a Laurent polynomial $g(\v x) \in R[x_1^{\pm},\ldots,x_n^\pm]$
the functional
\[
\rho_{m,g}: \omega\mapsto \mbox{constant term of } g(\v x)^m \omega.
\]
is a period map mod $m$. It is clear that on formal derivatives we also
have  $\rho_{m,g}(U_f) \subset mR$. These properties follow easily if one observes that,
modulo $m$, $m$th powers behave like constants under derivations 
(see Part I, Lemma 5.1).  In Part I we already used two particular instances of these period
maps: $\tau_{m \v v} = \rho_{m,\v x^{-\v v} f(\v x)}$ for $\v v \in \Delta_\Z$ and 
$\alpha_{m\v k} = \rho_{m,\v x^{-\v k}}$ for $\v k \in C(\Delta - \v b)_\Z$. 
We now describe their behaviour under the Cartier operator and relevant congruences 
in this more general context:

\begin{proposition}
For a Laurent polynomial $g = \sum g_\v w \v x^\v w$ denote $g^\sigma = \sum g_\v w^\sigma \v x^\v w$. For any $m \ge 1$ divisible by $p$ we have $\rho_{m,g}\is \rho_{m/p,g^\sigma} \circ\cartier \quad \mod{p^{\ord_p(m)}}$.
\end{proposition}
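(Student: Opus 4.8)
The plan is to rewrite both sides as constant terms of Laurent expansions with respect to the fixed vertex $\v b$ and then to reduce the statement to a single congruence between Laurent polynomials. Recall that $\cartier$ acts on formal expansions by $\sum_{\v k}a_{\v k}\v x^{\v k}\mapsto\sum_{\v k}a_{p\v k}\v x^{\v k}$. Two consequences of this formula will be used, both immediate by comparing coefficients: the constant term of a formal Laurent series is unchanged by $\cartier$, and the projection formula $\cartier\bigl(h(\v x^p)\,\psi\bigr)=h(\v x)\,\cartier(\psi)$ holds for every Laurent polynomial $h$ and every formal series $\psi$ supported in $C(\Delta-\v b)$. Writing $[\,\cdot\,]_{\v 0}$ for the constant term of the expansion with respect to $\v b$, and expanding $\omega\in\Omega_f$ accordingly, the projection formula with $h=(g^\sigma)^{m/p}$ and $\psi=\omega$ gives
\[
\rho_{m/p,g^\sigma}\bigl(\cartier(\omega)\bigr)=\bigl[(g^\sigma)^{m/p}\,\cartier(\omega)\bigr]_{\v 0}
=\bigl[\cartier\bigl((g^\sigma)^{m/p}(\v x^p)\,\omega\bigr)\bigr]_{\v 0}
=\bigl[(g^\sigma)^{m/p}(\v x^p)\,\omega\bigr]_{\v 0},
\]
whereas $\rho_{m,g}(\omega)=[\,g^m\omega\,]_{\v 0}$ by definition. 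Since the expansion of $\omega$ has coefficients in $R$, the difference of the two sides lies in $p^{\ord_p(m)}R$ for every $\omega$ as soon as one proves the congruence of Laurent polynomials over $R$
\begin{equation*}
g(\v x)^m\ \is\ (g^\sigma)^{m/p}(\v x^p)\qquad\mod{p^{\ord_p(m)}\,R[x_1^{\pm 1},\ldots,x_n^{\pm 1}]}.
\end{equation*}

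To prove this, put $e=\ord_p(m)$, write $m=p^e n$ with $p\nmid n$, and set $G=g^n$, so that $g^m=G^{p^e}$ and, since $\sigma$ is a ring endomorphism, $(g^\sigma)^{m/p}=(g^{m/p})^\sigma=(G^{p^{e-1}})^\sigma=(G^\sigma)^{p^{e-1}}$. The base case combines the ``freshman's dream'' with the Frobenius property $a^p\is\sigma(a)\mod p$: if $G=\sum_{\v w}G_{\v w}\v x^{\v w}$, then $G^p\is\sum_{\v w}G_{\v w}^p\,\v x^{p\v w}\is\sum_{\v w}G_{\v w}^\sigma\,\v x^{p\v w}=G^\sigma(\v x^p)\mod p$. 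Now raise this congruence to the $p^{e-1}$-st power using the elementary fact that $P\is Q\mod{p^a}$ with $a\ge1$ implies $P^p\is Q^p\mod{p^{a+1}}$ (expand $P^p-Q^p=\sum_{k=1}^{p}\binom pk Q^{p-k}(P-Q)^k$ and check $v_p\bigl(\binom pk p^{ak}\bigr)\ge a+1$ for $1\le k\le p$). Iterating $e-1$ times yields $G^{p^e}=(G^p)^{p^{e-1}}\is\bigl(G^\sigma(\v x^p)\bigr)^{p^{e-1}}\mod{p^e}$, and since $\v x\mapsto\v x^p$ commutes with taking powers, $\bigl(G^\sigma(\v x^p)\bigr)^{p^{e-1}}=\bigl((G^\sigma)^{p^{e-1}}\bigr)(\v x^p)=(g^\sigma)^{m/p}(\v x^p)$, which is the asserted congruence.

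The only delicate point is obtaining precision $p^{\ord_p(m)}$ rather than just $p$: this gain comes solely from $g^{m/p}$ being a $p^{e-1}$-st power, so one should raise the single base-case congruence $G^p\is G^\sigma(\v x^p)\mod p$ \emph{directly} to the power $p^{e-1}$, rather than iterate the one-step relation $G^{p^j}\is G^{\sigma^j}(\v x^{p^j})$, which would shed precision each time it passes through the substitution $\v x\mapsto\v x^p$. Two routine remarks complete the bookkeeping: $g$ need not be supported in $\Delta$, so $g^m\omega$ and $(g^\sigma)^{m/p}(\v x^p)\,\omega$ are to be read simply as formal Laurent series with respect to $\v b$, whose constant terms are still well defined; and $\rho_{m/p,g^\sigma}$ really is a period map mod $m/p$ attached to $f^\sigma$, since $\sigma$ sends the unit coefficient $f_{\v b}$ of $f$ to a unit of $R$.
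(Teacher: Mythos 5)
Your argument is correct, and it spells out what the paper leaves implicit by citing the analogous proof in Part~I. The two ingredients — (i) reading $\cartier$ as the operator $\sum a_{\v k}\v x^{\v k}\mapsto\sum a_{p\v k}\v x^{\v k}$ on Laurent expansions, which gives the identity $\rho_{m/p,g^\sigma}(\cartier(\omega))=[\,(g^\sigma)^{m/p}(\v x^p)\,\omega\,]_{\v 0}$ exactly, and (ii) the polynomial congruence $g^m\equiv(g^\sigma)^{m/p}(\v x^p)\bmod p^{\ord_p m}$ obtained by raising $G^p\equiv G^\sigma(\v x^p)\bmod p$ to the power $p^{e-1}$ via the standard $\bmod\,p^{a}\Rightarrow\bmod\,p^{a+1}$ lemma — are precisely the mechanism behind Proposition~5.2 of Part~I (and the remark in the present paper that ``$m$th powers behave like constants'' is Lemma~5.1 there), so this is the same route, not a new one. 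One remark on hygiene you already flagged yourself and which is worth keeping: you must raise the single congruence $G^p\equiv G^\sigma(\v x^p)\bmod p$ to the $p^{e-1}$st power directly rather than chain one-step versions through repeated substitutions $\v x\mapsto\v x^p$, since the latter loses the needed $p$-power gain; your write-up handles this correctly.
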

\begin{proof}Similar to the proof of Proposition~5.2 in Part I.\end{proof}

\begin{theorem}\label{main35} Let $\mu \subseteq \Delta$ be an \emph{open} set and $h=\# \mu_\Z$. For $m \ge 1$ consider column vectors $\bm{\rho}_m \in R^h$ with components $\rho_{m,g}(\omega_{\v u})$ for $\v u\in\mu_\Z$. If $R$ is $p$-adically complete and the Hasse--Witt matrix $\beta_p(\mu)$ is invertible, then for any Frobenius lift $\sigma$ and any derivation $\delta$ of $R$ we have
\be{frobenius}
\bm{\rho}_{m} \is \Lambda_\sigma \; \sigma(\bm{\rho}_{m/p}) \quad \mod {p^{\ord_p(m)}}
\ee
and
\be{horizontality}
\delta(\bm{\rho}_{m}) \is N_\delta \; \bm{\rho}_{m} \quad \mod {p^{\ord_p(m)}}
\ee
for all $m \ge 1$.
\end{theorem}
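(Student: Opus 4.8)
The plan is to imitate the proof of Theorem~\ref{main3}, replacing the exact period map $p_{\v v}$ by the period map mod $m$, so that every identity in the Cartier module $Q_f(\mu)$ becomes a congruence mod $p^{\ord_p(m)}$. First I would recall that by the construction of $\Lambda_\sigma$ and $N_\delta$ we have, for each $\v u \in \mu_\Z$,
\[
\cartier(\omega_{\v u}) \is \sum_{\v w \in \mu_\Z} (\Lambda_\sigma)_{\v u,\v w}\, \omega_{\v w}^\sigma \mod{U_{f^\sigma}(\mu)},
\qquad
\delta(\omega_{\v u}) \is \sum_{\v w \in \mu_\Z} (N_\delta)_{\v u,\v w}\, \omega_{\v w} \mod{U_f(\mu)}.
\]
Apply $\rho_{m,g^\sigma}$ to the first relation. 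On the left, the Proposition immediately above gives $\rho_{m,g^\sigma}(\cartier(\omega_{\v u})) \is \rho_{mp,g}(\omega_{\v u}) \mod{p^{\ord_p(mp)}}$; reindexing (i.e. replacing $m$ by $m/p$ when $p \mid m$), the left side becomes the $\v u$-component of $\bm{\rho}_m$ modulo $p^{\ord_p(m)}$. On the right, $R$-linearity of $\rho_{m/p,g^\sigma}$ turns it into $\sum_{\v w}(\Lambda_\sigma)_{\v u,\v w}\,\rho_{m/p,g^\sigma}(\omega_{\v w}^\sigma)$; since $\rho_{m/p,g^\sigma}(\omega_{\v w}^\sigma) = \sigma(\rho_{m/p,g}(\omega_{\v w}))$ by the very definition of the twisted period map, this is precisely the $\v u$-component of $\Lambda_\sigma\,\sigma(\bm{\rho}_{m/p})$. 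The congruence $\rho_{m/p,g^\sigma}(U_{f^\sigma}(\mu)) \subset (m/p)R \subset p^{\ord_p(m)-1}R$ controls the error term coming from the $\mod U_{f^\sigma}(\mu)$ ambiguity — but here is the subtlety: this only gives $p^{\ord_p(m)-1}$, one power short. So one must be more careful: the element of $U_{f^\sigma}(\mu)$ appearing is $\cartier$ of something, and applying $\rho_{m/p,g^\sigma}\circ\cartier \is \rho_{m,g}\mod{p^{\ord_p(m)}}$ to that $U_f$-element, together with $\rho_{m,g}(U_f)\subset mR$, recovers the full $p^{\ord_p(m)}$. I expect this bookkeeping — tracking that the defect lies in $\cartier(U_f(\mu))$ rather than merely in $U_{f^\sigma}(\mu)$, and hence is killed to the right power — to be the main obstacle, and it is exactly the place where the hypothesis that $\beta_p(\mu)$ is invertible (so that $U_f(\mu)$ is well-behaved and $\cartier$ is an isomorphism on $Q_f(\mu)$) enters.

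For the horizontality statement~\eqref{horizontality} I would apply $\rho_{m,g}$ to the relation $\delta(\omega_{\v u}) \is \sum_{\v w}(N_\delta)_{\v u,\v w}\omega_{\v w}\mod{U_f(\mu)}$. The right side gives the $\v u$-component of $N_\delta\,\bm{\rho}_m$ by $R$-linearity, and the $U_f(\mu)$-term contributes at most $mR$, hence is $\is 0\mod{p^{\ord_p(m)}}$. On the left, the defining property $\rho_{m,g}\circ\delta \is \delta\circ\rho_{m,g}\mod{mR}$ of a period map mod $m$ turns $\rho_{m,g}(\delta(\omega_{\v u}))$ into $\delta(\rho_{m,g}(\omega_{\v u})) = \delta(\bm{\rho}_m)_{\v u}$ up to $mR$, which again vanishes mod $p^{\ord_p(m)}$. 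Assembling the components over all $\v u \in \mu_\Z$ yields~\eqref{horizontality}.

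Finally I would note the two trivial edge cases: when $p \nmid m$ both sides of~\eqref{frobenius} and~\eqref{horizontality} are congruences mod $p^0 = 1$, hence vacuous, so one may assume $p \mid m$ throughout; and the formal-derivative vanishing $\rho_{m,g}(U_f)\subset mR$, used twice above, is exactly the extra property recorded after the definition of period map mod $m$, whose proof reduces (as in Part~I, Lemma~5.1) to the observation that $m$th powers are "constants mod $m$" under any derivation. Packaging these, the theorem follows.
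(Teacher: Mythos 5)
Your plan is the right one and follows the route the paper intends (the paper's proof is ``similar to the proof of Theorem~5.3 in Part~I''): apply the Proposition $\rho_{m,g}\equiv\rho_{m/p,g^\sigma}\circ\cartier\ \mod{p^{\ord_p(m)}}$ to the defining relation $\cartier(\omega_\v u)=\sum_\v w(\Lambda_\sigma)_{\v u\v w}\omega_\v w^\sigma+\eta_\v u$ with $\eta_\v u\in U_{f^\sigma}(\mu)$, use $R$-linearity and $\sigma(\rho_{m/p,g}(\omega_\v w))=\rho_{m/p,g^\sigma}(\omega_\v w^\sigma)$, and control the defect term. The horizontality part~\eqref{horizontality} is fine as you wrote it. You also correctly identified that the naive bound $\rho_{m/p,g^\sigma}(\eta_\v u)\in (m/p)R=p^{\ord_p(m)-1}R$ is one power short of the statement.

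The gap is in your proposed repair. You assert that ``the defect lies in $\cartier(U_f(\mu))$'', and that this is what the invertibility of $\beta_p(\mu)$ buys. But $\cartier(U_f(\mu))\subset pU_{f^\sigma}(\mu)\subsetneq U_{f^\sigma}(\mu)$, and nothing you have written shows either (a) that $\eta_\v u$ is in the image of $\cartier$ at all (the Cartier operator is far from injective and not obviously surjective $\widehat\Omega_f(\mu)\to\widehat\Omega_{f^\sigma}(\mu)$), or (b) that a preimage can be chosen inside $U_f(\mu)$. The invertibility of $\Lambda_\sigma$ on the quotients $Q_f(\mu)\to Q_{f^\sigma}(\mu)$ does not give you a lift to the level of $\widehat\Omega$ with the required properties, so as stated the repair is a hope, not an argument.

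What is actually needed is the weaker and more accessible statement $\eta_\v u\in pU_{f^\sigma}(\mu)$, which then immediately gives $\rho_{m/p,g^\sigma}(\eta_\v u)\in p\cdot(m/p)R=p^{\ord_p(m)}R$. This follows by combining two facts you did not invoke: first, the refined mod~$p$ description of the Cartier operator from Part~I, namely $\cartier(\omega_\v u)\equiv\sum_{\v w\in\mu_\Z}(\beta_p(\mu))_{\v u\v w}\,\omega_\v w^\sigma\ \mod{p\,\widehat\Omega_{f^\sigma}(\mu)}$ (this uses openness of $\mu$ to keep the supports inside $k\mu$); together with $\Lambda_\sigma\equiv\beta_p(\mu)\ \mod p$ this shows $\eta_\v u\in p\,\widehat\Omega_{f^\sigma}(\mu)\cap U_{f^\sigma}(\mu)$. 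Second, because $Q_{f^\sigma}(\mu)=\widehat\Omega_{f^\sigma}(\mu)/U_{f^\sigma}(\mu)$ is a \emph{free} $R$-module (this is precisely where $\beta_p(\mu)$ invertible enters), it is $p$-torsion-free, so $p\,\widehat\Omega_{f^\sigma}(\mu)\cap U_{f^\sigma}(\mu)=p\,U_{f^\sigma}(\mu)$. With $\eta_\v u=p\eta'_\v u$ and $\eta'_\v u\in U_{f^\sigma}(\mu)\subset d\Omega_{\rm formal}$, the vanishing $\rho_{m/p,g^\sigma}(\eta'_\v u)\in(m/p)R$ closes the power count. So your diagnosis of where the difficulty lies is correct, but the mechanism you sketch to resolve it is not the right one, and without the torsion-freeness step the proof does not reach $\mod p^{\ord_p(m)}$.
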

\begin{proof}Similar to the proof of Theorem~5.3 in Part I.\end{proof}

Let us choose a tuple of elements $\phi_\v v \in R$ for $\v v \in \Delta_\Z$ and
consider matrices of periods mod $m$ given by
\be{gamma-matrix}
(\gamma_m)_{\v u,\v v \in \Delta_\Z} = \mbox{constant term of }
\left(\phi_{\v v}^m-\left(\phi_{\v v}-f(\v x)/\v x^{\v v}\right)^m\right)\omega_\v u.
\ee
Observe that the entries of $\gamma_m$ do not depend on the choice of $\v b$
since they are constant terms of Laurent polynomials that are independent of $\v b$.
For a subset $\mu \subset \Delta$ we denote by $\gamma_m(\mu)$ the submatrix given by $(\gamma_m)_{\v u,\v v \in \mu_\Z}$. We can rewrite these matrices via $\beta$-matrices as

\[
(\gamma_m)_{\v u,\v v} = \sum_{k=1}^m (-1)^{k+1} {m \choose k} \phi_{\v v}^{m-k} (\beta_{k})_{\v u,\v v},
\]
from which the following congruence follows trivially:

\begin{lemma}\label{beta-gamma-mod-p}
We have $\beta_p(\mu)\is\gamma_p(\mu)\mod{p}$. In particular $\beta_p(\mu)$ is invertible
if and only if this holds for $\gamma_p(\mu)$. 
\end{lemma}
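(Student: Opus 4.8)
The plan is to establish the claimed congruence $\beta_p(\mu) \equiv \gamma_p(\mu) \pmod p$ directly from the displayed binomial expansion of $\gamma_m$, specialized to $m=p$. Recall that we have written
\[
(\gamma_m)_{\v u,\v v} = \sum_{k=1}^m (-1)^{k+1}\binom{m}{k}\phi_{\v v}^{m-k}(\beta_k)_{\v u,\v v}.
\]
Setting $m=p$, every binomial coefficient $\binom{p}{k}$ with $1 \le k \le p-1$ is divisible by $p$, so modulo $p$ only the single term $k=p$ survives. That term is $(-1)^{p+1}\binom{p}{p}\phi_{\v v}^{0}(\beta_p)_{\v u,\v v} = (\beta_p)_{\v u,\v v}$, using that $(-1)^{p+1}=1$ (for $p$ odd this is clear; for $p=2$ one has $-1 \equiv 1 \pmod 2$, so the sign is irrelevant in all cases). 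Hence $(\gamma_p)_{\v u,\v v} \equiv (\beta_p)_{\v u,\v v} \pmod p$ for each pair $\v u, \v v \in \mu_\Z$, which is exactly the first assertion.

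For the second assertion, I would simply note that if two square matrices over $R$ are congruent modulo $p$, then their reductions in $(R/pR)^{h\times h}$ coincide, so one is invertible over $R/pR$ if and only if the other is; and since $R$ is $p$-adically complete (hence $p$-adically Henselian, or more elementarily: a matrix over a $p$-adically complete ring is invertible iff its reduction mod $p$ is invertible, by the usual geometric-series lifting argument), invertibility of $\beta_p(\mu)$ over $R$ is equivalent to invertibility of its reduction mod $p$, and likewise for $\gamma_p(\mu)$. Chaining these equivalences gives that $\beta_p(\mu)$ is invertible over $R$ if and only if $\gamma_p(\mu)$ is.

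I do not anticipate any real obstacle here: the only mildly delicate point is the sign $(-1)^{p+1}$, which must be handled uniformly including the case $p=2$, and the (standard but worth stating) fact that over a $p$-adically complete ring a matrix lifts an invertible matrix mod $p$ precisely when it is itself invertible. Both are routine, so the proof will be a few lines.

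\begin{proof}
Specializing the displayed formula for $(\gamma_m)_{\v u,\v v}$ to $m=p$ gives
\[
(\gamma_p)_{\v u,\v v} = \sum_{k=1}^{p} (-1)^{k+1}\binom{p}{k}\phi_{\v v}^{p-k}(\beta_k)_{\v u,\v v}.
\]
For $1 \le k \le p-1$ the binomial coefficient $\binom pk$ is divisible by $p$, so modulo $p$ only the term $k=p$ remains, which equals $(-1)^{p+1}(\beta_p)_{\v u,\v v} = (\beta_p)_{\v u,\v v}$ since $(-1)^{p+1}\equiv 1 \pmod p$. Thus $\gamma_p(\mu)\equiv\beta_p(\mu)\pmod p$. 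Finally, since $R$ is $p$-adically complete, a matrix in $R^{h\times h}$ is invertible over $R$ if and only if its reduction modulo $p$ is invertible over $R/pR$; as $\beta_p(\mu)$ and $\gamma_p(\mu)$ have the same reduction modulo $p$, one is invertible over $R$ precisely when the other is.
\end{proof}
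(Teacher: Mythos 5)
Your argument is exactly the one the paper intends: the lemma is stated as following ``trivially'' from the displayed binomial expansion of $\gamma_m$ in terms of $\beta_k$, and specializing $m=p$ so that only the $k=p$ term survives modulo $p$ is precisely that trivial deduction. The handling of the sign and the standard lifting of invertibility over a $p$-adically complete ring are both correct and match the implicit reasoning in the paper.
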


Application of Theorem \ref{main35} to the period map given by $\rho_{m,\phi_\v v}$
minus $\rho_{m,\phi_\v v-f/\v x^{\v v}}$ yields the following

\begin{corollary}\label{main4} Let $\gamma_m(\mu)$ be as above and suppose $\gamma_p(\mu)$ is invertible. 
Then for any Frobenius lift $\sigma$ and any derivation $\delta$ of $R$ we have
\[\bal
&\gamma_{m}(\mu)\is\Lambda_\sigma \; \sigma\left(\gamma_{m /p}(\mu)\right) \; \mod {p^{\ord_p(m)}},\\
&\delta(\gamma_{m}(\mu))\is N_\delta \; \gamma_{m}(\mu) \; \mod {p^{\ord_p(m)}}
\eal\] 
for all $m\ge1$.  
\end{corollary}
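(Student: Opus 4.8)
The plan is to recognize that Corollary~\ref{main4} is an immediate specialization of Theorem~\ref{main35} once we identify the right period map mod $m$. First I would fix a column index $\v v \in \mu_\Z$ and set $g_1 = \phi_\v v$ (a constant Laurent polynomial) and $g_2 = \phi_\v v - f(\v x)/\v x^\v v$, so that $\rho_{m,g_1} - \rho_{m,g_2}$ sends $\omega_\v u$ to the constant term of $\bigl(\phi_\v v^m - (\phi_\v v - f(\v x)/\v x^\v v)^m\bigr)\omega_\v u$, which is exactly $(\gamma_m)_{\v u, \v v}$. Since each of $\rho_{m,g_1}$ and $\rho_{m,g_2}$ is a period map mod $m$ (as noted in the paragraph preceding Theorem~\ref{main35}), and since period maps mod $m$ form an $R$-module, their difference is again a period map mod $m$, and it vanishes mod $mR$ on $U_f$ because each summand does. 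Thus the $\v v$-th column of $\gamma_m(\mu)$ is precisely the vector $\bm{\rho}_m$ of Theorem~\ref{main35} attached to this particular period map.

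Next I would invoke Lemma~\ref{beta-gamma-mod-p}: the hypothesis that $\gamma_p(\mu)$ is invertible is equivalent to $\beta_p(\mu)$ being invertible, which is the running hypothesis of Theorem~\ref{main35}. So for each fixed $\v v$ we may apply \eqref{frobenius} and \eqref{horizontality} of Theorem~\ref{main35} to obtain
\[
(\gamma_m(\mu))_{\cdot,\v v} \is \Lambda_\sigma \, \sigma\bigl((\gamma_{m/p}(\mu))_{\cdot,\v v}\bigr) \quad \mod{p^{\ord_p(m)}}
\]
and
\[
\delta\bigl((\gamma_m(\mu))_{\cdot,\v v}\bigr) \is N_\delta \, (\gamma_m(\mu))_{\cdot,\v v} \quad \mod{p^{\ord_p(m)}}.
\]
Assembling these column identities over all $\v v \in \mu_\Z$ yields the two matrix congruences in the statement. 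The derivation congruence requires the mild observation that $\delta$ acts entrywise and that $\sigma$ in the Frobenius relation is applied entrywise as well, which is automatic from the definitions.

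There is essentially no obstacle here; the only point deserving care is that the period map depends on $\v v$ through $g_2 = \phi_\v v - f(\v x)/\v x^\v v$, so one really is applying Theorem~\ref{main35} once for each column rather than a single time, and one must check that the modulus $p^{\ord_p(m)}$ and the matrices $\Lambda_\sigma, N_\delta$ (which do not depend on $\v v$) are uniform across these applications — they are, since $\Lambda_\sigma$ and $N_\delta$ depend only on $f$, $\mu$, $\sigma$, $\delta$. One should also note that $\rho_{m,g_2}$ genuinely is of the form $\rho_{m,g}$ with $g \in R[x_1^{\pm},\dots,x_n^{\pm}]$: indeed $f(\v x)/\v x^\v v$ is a Laurent polynomial, so $g_2$ lies in $R[x_1^{\pm},\dots,x_n^{\pm}]$ as required. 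With these remarks in place, Corollary~\ref{main4} follows.
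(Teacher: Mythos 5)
Your proposal is correct and follows essentially the same route as the paper: the paper likewise obtains the corollary by applying Theorem~\ref{main35} to the period map $\rho_{m,\phi_\v v}-\rho_{m,\phi_\v v-f/\v x^{\v v}}$ column by column, with Lemma~\ref{beta-gamma-mod-p} converting the invertibility of $\gamma_p(\mu)$ into that of $\beta_p(\mu)$. Your extra checks (that the difference is again a period map mod $m$ vanishing on $U_f$, and that $\Lambda_\sigma$, $N_\delta$ are uniform over the columns) are exactly the implicit details the paper leaves to the reader.
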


As it follows from the first congruence in this corollary, we have 
\[
\gamma_{p^s}(\mu) \is \gamma_p(\mu) \cdot \sigma \left( \gamma_p(\mu) \right) \cdot \ldots \cdot \sigma^{s-1} \left( \gamma_p(\mu) \right) \; \mod p.
\]
Hence all $\gamma_{p^s}(\mu)$ are invertible and we obtain $p$-adic limit formulas
\[
\Lambda_\sigma \is \gamma_{p^s}(\mu) \cdot \sigma\left(\gamma_{p^{s-1}}(\mu)\right)^{-1}, \quad N_\delta \is \delta(\gamma_{p^s}(\mu))\cdot \gamma_{p^s}(\mu)^{-1} \; \mod {p^s}.
\]

\begin{proof}[Proof of Theorem \ref{MeV}.]
We apply Corollary~\ref{main4} in the case $f(\v x)=1-tg(\v x),\phi=1$ and $\mu=\Delta^\circ$.
Then $\gamma_m(\mu)$ is the polynomial $\sum_{k=0}^{m-1}b_kt^k$. It follows from Corollary~\ref{main4} with $\sigma(t)=t^p$ that $\gamma_{p^s}(t)\is\Lambda\gamma_{p^{s-1}}(t^p)\mod{p^s}$ for all
$s\ge1$. Theorem \ref{MeV} then follows from Corollary \ref{cor-main3} which
says that $\Lambda=q(t)/q(t^p)$.
\end{proof}

\begin{remark}\label{dworkremark2}
Here is a small variation on the proof of Theorem \ref{MeV}. We take $t_0\in\Z_p$ and
consider $f(\v x)=1-t_0g(\v x)\in \Z_p[x_1^{\pm 1},\ldots,x_n^{\pm 1}]$. Choose again $\mu=\Delta^\circ$ and suppose that $\gamma_p(t_0)\in\Z_p^\times$. Then we find that $\lim_{s\to\infty}\gamma_{p^s}(t_0)/\gamma_{p^{s-1}}(t_0)$ equals the unit root of the zeta-function of $f=0$ (by the results in the Appendix to Part I). In the Dwork example, see Remark~\ref{dworkremark1}, this means that $F_{p^s}(t_0)/F_{p^{s-1}}(t_0)$ tends to the
unit root of the zeta function of the corresponding elliptic curve. This deviates from what
one usually sees in the literature where one takes the limit
$F_{p^s}(t_0)/F_{p^{s-1}}(t_0^p)$ and $t_0$ a Teichm\"uller lift, see
for example \cite[(6.29)]{dwork69}. In the first limit we
can take any $t_0$ in its residue class and the limit will not depend on it.
\end{remark}

\section{A-hypergeometric periods}
We continue the calculation of periods following the idea in
Section 2. Let $f(\v x)=\sum_{i=1}^Nv_i\v x^{\v a_i}$,
where the $v_i$ are independent variables. This is the A-hypergeometric
setting. Let $\Delta \subset \R^n$ be the Newton polytope of $f(\v x)$, which is now the convex hull of the set $\{ \v a_1, \ldots, \v a_N \} \subset \Z^n$.
Pick some integer exponent vector $\v u \in k\Delta$, 
expand $\v x^{\v u}f(\v x)^{-k}$ with respect to $\v a_i\in\Delta_\Z$ and take
the constant term. We get
\begin{equation}\label{expand-geometric}
p_{\v a_i}\left( \v x^{\v u} f(\v x)^{-k} \right):= \mbox{ constant term of } 
\frac{\v x^{\v u - k \v a_i}}{v_i^k} \sum_{m \ge 0} \binom{-k}{m} 
\left( \sum_{r \ne i} \frac{v_r}{v_i} \v x^{\v a_r - \v a_i }\right)^m.
\end{equation}
Before we proceed we like to make a remark which considerably simplifies our
calculation. Denote by $\tilde{\v a}_r\in\Z^{n+1}$ 
the exponent vector $\v a_r$ preceded by an extra component $1$.
We call the set $A=\{ \widetilde {\v a}_1, \ldots, \widetilde {\v a}_N  \} \subset \Z^{n+1}$ 
\emph{saturated} when 
\[
\left( \sum_{j=1}^N \R_{\ge 0} \, \widetilde {\v a}_j \right) \cap \Z^{n+1} = \sum_{j=1}^N \Z_{\ge 0} \, \widetilde {\v a}_j.
\]

When $A$ is saturated, the following Proposition can be applied to any exponent vector~$\v u$:

\begin{proposition}\label{period-map-image} For an integral point $\v u \in k\Delta$ we denote $\tilde{\v u}=(k,\v u)$. Assume that there exist $\alpha_1,\ldots,\alpha_N \in \Z_{\ge 0}$ such that $\sum_{r=1}^N\alpha_r\tilde{\v a}_r=\tilde{\v u}$. Then $p_{\v a_i}(\v x^{\v u} f(\v x)^{-k})$ is equal to the application of the differential operator $\frac{(-1)^{k-1}}{(k-1)!} \prod_{r=1}^N\partial_r^{\alpha_r}$ where $\partial_r=\frac{\partial}{\partial v_r}$ to the universal series 
\[
p_{\v a_i}(\log f):=\mbox{ constant term of }\left(\log v_i+\sum_{m \ge 1} \frac{(-1)^{m-1}}{m}
\left( \sum_{r \ne i} \frac{v_r}{v_i} \v x^{\v a_r - \v a_i }\right)^m\right).
\]  
\end{proposition}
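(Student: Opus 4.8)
The plan is to work with the ``universal'' logarithmic series and differentiate it, checking that differentiation under the constant-term operator produces exactly the geometric expansion in \eqref{expand-geometric}. First I would observe that $p_{\v a_i}(\log f)$ is a formal object living in $S[\log v_i]$ where $S$ is a suitable completion of the polynomial ring in the $v_r$; the point of adding $\log v_i$ is only to make $\partial_i$ act correctly on the $v_i^{-k}$ that will appear. The key identity to establish is that for any monomial differential operator $D=\prod_r \partial_r^{\alpha_r}$ with $\sum_r\alpha_r\tilde{\v a}_r=\tilde{\v u}=(k,\v u)$ — so in particular $\sum_r\alpha_r=k$ (the first coordinate) and $\sum_r\alpha_r\v a_r=\v u$ (the remaining coordinates) — one has
\[
\frac{(-1)^{k-1}}{(k-1)!}\, D\!\left(\log v_i+\sum_{m\ge 1}\frac{(-1)^{m-1}}{m}\Big(\sum_{r\ne i}\tfrac{v_r}{v_i}\v x^{\v a_r-\v a_i}\Big)^m\right)
=\frac{\v x^{\v u-k\v a_i}}{v_i^k}\sum_{m\ge0}\binom{-k}{m}\Big(\sum_{r\ne i}\tfrac{v_r}{v_i}\v x^{\v a_r-\v a_i}\Big)^m,
\]
from which the Proposition follows by taking constant terms on both sides.

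To prove this identity I would first reduce to the case $i=N$ (say) by symmetry of notation, and then change variables, writing $f=v_i\v x^{\v a_i}(1+h)$ with $h=\sum_{r\ne i}(v_r/v_i)\v x^{\v a_r-\v a_i}$, so that $\log f=\log v_i+\langle\v a_i,\log\v x\rangle+\log(1+h)$. Applying $D$ kills the term $\langle\v a_i,\log\v x\rangle$ (it has no $v$-dependence once $\alpha_r>0$ for at least one $r$, which holds since $k\ge1$), so we are differentiating $\log v_i+\log(1+h)=\log(v_i(1+h))=\log(f\,\v x^{-\v a_i})$. Now the cleanest route is to differentiate the \emph{exponential} relation: from $(-1)^{k-1}(k-1)!^{-1}D\log(\v x^{-\v a_i}f)$ we want to recover $\v x^{\v u}f^{-k}$ up to the monomial factor. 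The mechanism is the standard one behind A-hypergeometric ``Euler--Koszul'' constructions: differentiating $\log f$ once in $\partial_r$ brings down $\v x^{\v a_r}/f$, and iterated differentiation of $f^{-j}$ by $\partial_r$ gives $-j\,\v x^{\v a_r}f^{-j-1}$; tracking the combinatorial constants one sees that $\prod_r\partial_r^{\alpha_r}\log f = (-1)^{k-1}(k-1)!\,(\prod_r(\v x^{\v a_r})^{\alpha_r})\,f^{-k}+(\text{terms with fewer than }k\text{ derivatives hitting }\log) $, and the claim is that the extra terms all vanish. They vanish precisely because each $\partial_r$ must be ``used up'' producing one more factor of $f^{-1}$: the only way to reach a total of $k=\sum\alpha_r$ derivatives is to have each one increase the power of $f$ in the denominator, so there is a unique surviving term. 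Then $\prod_r(\v x^{\v a_r})^{\alpha_r}=\v x^{\sum\alpha_r\v a_r}=\v x^{\v u}$ and we get $(-1)^{k-1}(k-1)!\,\v x^{\v u}f^{-k}$, so after multiplying by $(-1)^{k-1}/(k-1)!$ we land on $\v x^{\v u}f^{-k}$ exactly; expanding $f^{-k}=v_i^{-k}\v x^{-k\v a_i}(1+h)^{-k}$ and pulling constant terms reproduces \eqref{expand-geometric}. The role of the $\log v_i$ summand is exactly to supply, via $\partial_i$, the correct negative powers of $v_i$ when some $\alpha_i>0$: without it one would only get $\log(1+h)$ and miss the $v_i^{-k}$ prefactor, so one checks separately that $D(\log v_i)$ combines correctly with the $v_i$-dependence hidden in the $h^m$ terms.

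The main obstacle is the bookkeeping in the ``extra terms vanish'' step: one is really claiming an identity of the shape $\prod_r\partial_r^{\alpha_r}\log f=(-1)^{k-1}(k-1)!\,\v x^{\v u}f^{-k}$ whenever $\sum\alpha_r=k$, and while the heuristic (``every derivative must create a new pole of $f$'') is transparent, writing it rigorously calls for an induction on $k$ or a Fa\`a-di-Bruno / multinomial argument controlling which partial derivatives land on the $\log$ versus on the accumulating powers of $f^{-1}$. I would handle this by induction on $\sum\alpha_r$: applying one $\partial_r$ to $\prod_s\partial_s^{\alpha_s'}\log f$ (with $\sum\alpha_s'=j<k$) produces $(-1)^{j-1}(j-1)!$ times $\partial_r(\v x^{\v u'}f^{-j})=(-1)^{j-1}(j-1)!(-j)\v x^{\v u'+\v a_r}f^{-j-1}=(-1)^j j!\,\v x^{\v u'+\v a_r}f^{-j-1}$, which is exactly the inductive statement with $j+1$ in place of $j$; the base case $j=1$ is $\partial_r\log f=\v x^{\v a_r}f^{-1}$, and $\partial_i(\log v_i+\cdots)$ handles the $i$-th variable with the extra $\log v_i$ term contributing the missing $v_i^{-1}$ pieces. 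Everything else — convergence of the series in the $t$-adic (here $v$-adic) topology, legitimacy of differentiating term by term, independence from the choice of vertex — is routine given the setup of Section~\ref{sec:periods} and the saturation hypothesis, which guarantees that the nonnegative integral combination $\sum\alpha_r\tilde{\v a}_r=\tilde{\v u}$ exists in the first place.
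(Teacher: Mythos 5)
Your proposal is correct and follows essentially the same route as the paper, whose proof is exactly the induction on $k$ (equivalently on $\sum_r\alpha_r$) that you carry out: since $f$ is linear in the $v_r$, each $\partial_r$ applied to $\v x^{\v w}f^{-j}$ cleanly yields $-j\,\v x^{\v w+\v a_r}f^{-j-1}$, so there are in fact no ``extra terms'' to worry about, and your concluding induction already settles the bookkeeping you flagged as the main obstacle. The remaining observations (the $\log v_i$ term supplying $\partial_i\log(v_i(1+h))$, the vanishing of $D$ on the $v$-independent part, and the commutation of $\partial_r$ with taking the constant term of the $\v a_i$-expansion) are exactly the routine checks implicit in the paper's one-line proof.
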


The proof is straightforward with induction on $k$.

We proceed with the calculation of $p_{\v a_i}(\log f)$ and get
\[
\log v_i+\sum_{\v l} \frac{(-1)^{\ell_1+\cdots\vee\cdots+\ell_N-1}}{\ell_1+\cdots\vee\cdots+\ell_N}
{\ell_1+\cdots\vee\cdots+\ell_N\choose \ell_1,\ldots,\vee,\ldots,\ell_N} 
\prod_{r \ne i} (v_r/v_i)^{\ell_r},
\]
where the sum is over all non-negative $\ell_1,\ldots \vee \ldots \ell_N$, not all zero, such that
$\sum_{r \ne i} \ell_r (\v a_r- \v a_i) = \v 0$. 
Here the $\vee$ in the summation range and the sum itself means that
$\ell_i$ is to be omitted. 
Introduce $\ell_i=-\sum_{r\ne i}\ell_r$. Recall our notation $\tilde{\v a}_r=(1,\v a_r)$. 
Then the definition of $\ell_i$ sees to it that the support of the resulting Laurent
series (aside from the constant $\log v_i$) is contained in the set
\[
L_i:=\{\Bell=(\ell_1,\ldots,\ell_N)\in\Z^N|\sum_{r=1}^N \ell_r\tilde{\v a}_r=\v 0,
\mbox{$\ell_r\ge0$ if $r\ne i$}\}.
\]

In order to have a more compact notation, let us rewrite the multinomial coefficient as
\[
\frac{(-1)^{\ell_1+\cdots\vee\cdots+\ell_N-1}}{\ell_1+\cdots\vee\cdots+\ell_N}
{\ell_1+\cdots\vee\cdots+\ell_N\choose \ell_1,\ldots,\vee,\ldots,\ell_N}
=\prod_{r=1}^N\frac{1}{\Gamma^*(\ell_r+1)},
\]
where $\Gamma^*(n)$ with $n\in\Z$ is defined as $(n-1)!$ if $n\ge1$ and $(-1)^n/|n|!$ if
$n\le0$. Notice that the modified $\Gamma^*$ satisfies $\Gamma^*(n+1)=n\Gamma^*(n)$ for
all integers $n\ne0$. One also checks that $\Gamma^*(n)\Gamma^*(1-n)={\rm sign}(n)(-1)^{n-1}$
for all integers $n$. Here ${\rm sign}(n)=-1$ if $n\le0$ and $1$ if $n\ge1$.
The period now takes the shape
\begin{equation}\label{p-ai-u}
p_{\v a_i}\left(\frac{\v x^{\v u}}{f(\v x)^k}\right)=
\frac{(-1)^{k-1}}{\Gamma(k)}\prod_{r=1}^N\partial_r^{\alpha_r}
\left(\log v_i+\sum_{\Bell \in L_i^*}
\prod_{r=1}^N\frac{v_r^{\ell_r}}{\Gamma^*(\ell_r+1)}\right),
\end{equation}
where $L_i^*=L_i\setminus\{\v 0\}$. Although we do not need this in the rest of this
paper, we like to notice that
this period is a Laurent series solution of the A-hypergeometric system of
equations with A-matrix the
matrix with columns $\tilde{\v a}_1,\ldots,\tilde{\v a}_N$ and parameter vector
$-\tilde{\v u}$.

When we vary the different periods over $i$ we see that the supports of the Laurent
series also vary. Fortunately it turns out that their union also lies in a regular cone.
The following result, as well as its proof, is taken from~\cite[Prop 2.9]{AS16}.
We use a different formulation however.

\begin{lemma}\label{direct-sum-cones-lemma}
Let $L_i(\R)$ be the real positive cone generated by $L_i$
and define $L^\circ(\R)=\sum_{i=1}^NL_i(\R)$. Then $L^\circ(\R)$ is a finitely 
generated cone with $\v 0$ as a vertex.
\end{lemma}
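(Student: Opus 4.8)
The plan is to show that $L^\circ(\R) = \sum_{i=1}^N L_i(\R)$ is a pointed (i.e. having $\v 0$ as a vertex) finitely generated rational cone. Finite generation is easy: each $L_i$ is the set of lattice points in a rational polyhedral cone $L_i(\R) \subset \R^N$ (it is cut out by the linear equations $\sum_r \ell_r \widetilde{\v a}_r = \v 0$ together with the inequalities $\ell_r \ge 0$ for $r \ne i$), hence $L_i(\R)$ is finitely generated by Gordan's lemma, and a finite sum of finitely generated cones is finitely generated. So the real content is pointedness: one must exhibit a linear functional on $\R^N$ that is strictly positive on $L^\circ(\R) \setminus \{\v 0\}$, equivalently strictly positive on each $L_i \setminus \{\v 0\}$ simultaneously.

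First I would set up the right functional. Because $\sum_{r=1}^N \ell_r \widetilde{\v a}_r = \v 0$ forces $\sum_{r=1}^N \ell_r = 0$ (reading off the first coordinate of $\widetilde{\v a}_r = (1, \v a_r)$), any $\Bell \in L_i$ has $\ell_i = -\sum_{r \ne i} \ell_r \le 0$, while all other coordinates are $\ge 0$. The natural candidate is therefore something like $\psi(\Bell) = \sum_{r} \ell_r \langle \v c, \widetilde{\v a}_r \rangle$ for a suitable $\v c \in \R^{n+1}$ — but this vanishes identically on every $L_i$ since $\sum_r \ell_r \widetilde{\v a}_r = \v 0$. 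So that is not it; instead I would look for a functional that is positive on the generators of each $L_i(\R)$. The key geometric fact is that $\v 0 = \widetilde{\v a}_i^{\,*}$ is "outward-facing" relative to the other $\widetilde{\v a}_r$: pick a linear form $\ell_0$ on $\R^{n+1}$ with $\ell_0(\widetilde{\v a}_r) = 1$ for all $r$ (possible since all $\widetilde{\v a}_r$ have first coordinate $1$, so $\ell_0 = $ projection to the first coordinate works), and then for each $i$ a linear form $\ell_i'$ with $\ell_i'(\widetilde{\v a}_i) < \ell_i'(\widetilde{\v a}_r)$ for all $r \ne i$; such an $\ell_i'$ exists precisely because $\widetilde{\v a}_i$ is a vertex... but wait, the $\v a_i$ need not be vertices of $\Delta$. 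This is the crux, and it is exactly where I expect the main obstacle to sit.

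The resolution, which I would borrow from the cited \cite[Prop 2.9]{AS16}, is to not treat the $\v a_i$ as vertices but to use the cone structure directly. The clean argument: let $\sigma$ be the cone $\sum_{r=1}^N \R_{\ge 0}\widetilde{\v a}_r \subset \R^{n+1}$, which is pointed (all generators lie in the open half-space first-coordinate $> 0$ together with the origin being the apex). Its dual cone $\sigma^\vee$ is full-dimensional; choose $\v c$ in the interior of $\sigma^\vee$, so $\langle \v c, \widetilde{\v a}_r\rangle > 0$ for every $r$. Now I claim that $\Bell \mapsto -\ell_i \cdot M + \sum_{r \ne i}\ell_r$ or, better, a single functional of the form $\Bell \mapsto \sum_{r=1}^N \ell_r \cdot w_r$ with cleverly chosen weights $w_r$ works simultaneously for all $i$. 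Concretely: since $L_i \ni \Bell$ satisfies $\ell_r \ge 0$ ($r \ne i$) and $\ell_i = -\sum_{r\ne i}\ell_r$, we want weights with $w_r - w_i > 0$ for all $r \ne i$ and all $i$ — impossible for a single choice. So one genuinely needs the sum-of-cones structure: it suffices that for each $i$ there is a functional $\psi_i \ge 0$ on $L_i(\R)$, strictly positive off $\v 0$, and that these patch — and here the real input is that $L^\circ(\R)$ being the sum of the $L_i(\R)$, pointedness of the sum follows once one shows $L^\circ(\R) \cap (-L^\circ(\R)) = \{\v 0\}$. For that, I would argue: if $\Bell \in L_i(\R)$ and $-\Bell \in L_j(\R)$, then $\ell_r \ge 0$ for $r \ne i$ and $\ell_r \le 0$ for $r \ne j$, forcing $\ell_r = 0$ for all $r \notin \{i,j\}$; then $\ell_i \widetilde{\v a}_i + \ell_j \widetilde{\v a}_j = \v 0$ with $\ell_i \le 0 \le \ell_j$ (from $\Bell \in L_i$) and also $\ell_i \ge 0 \ge \ell_j$ (from $-\Bell \in L_j$, which lies in $L_j$), hence $\ell_i = \ell_j = 0$; iterating over all pairs, or rather noting a general element of $L^\circ(\R)$ is a sum $\sum_i \Bell^{(i)}$ with $\Bell^{(i)} \in L_i(\R)$, one runs the same cancellation using that $\ell^{(i)}_r \ge 0$ for $r \ne i$ to deduce any vector in $L^\circ(\R) \cap (-L^\circ(\R))$ must be $\v 0$. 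I expect the bookkeeping in this last "sum of cones" cancellation argument — making precise why the mixed-sign constraints collapse everything — to be the one genuinely delicate step, and I would lean on the formulation in \cite{AS16} to keep it clean.
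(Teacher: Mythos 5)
Your overall skeleton is right and agrees with the paper's: finite generation is immediate because each $L_i(\R)$ is a rational polyhedral cone (Minkowski--Weyl rather than Gordan is the relevant statement, but that is cosmetic), and pointedness reduces to showing that a relation $\sum_{i=1}^N\Bell^{(i)}=\v 0$ with $\Bell^{(i)}\in L_i(\R)$ forces every $\Bell^{(i)}=\v 0$. But that cancellation statement \emph{is} the whole content of the lemma, and you do not prove it: you verify only the case of a single element of $L_i(\R)$ against a single element of $L_j(\R)$, and then assert that ``one runs the same cancellation'' in general, deferring the bookkeeping to \cite{AS16}. This is a genuine gap, not bookkeeping, because pure sign reasoning on the coordinates cannot work. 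For instance $\Bell^{(1)}=(-1,1,0,\dots,0)$ and $\Bell^{(2)}=(1,-1,0,\dots,0)$ satisfy all the sign constraints defining $L_1$ and $L_2$ and sum to $\v 0$; what rules them out is not the ``mixed-sign constraints'' but the linear relations $\sum_r\ell_r\tilde{\v a}_r=\v 0$, which here would force $\tilde{\v a}_1=\tilde{\v a}_2$. The missing idea is exactly the convex-geometric step in the paper's proof: a nonzero $\Bell^{(i)}$ expresses $\tilde{\v a}_i$ as a \emph{convex} combination of the other involved points $\tilde{\v a}_k$ (the coefficients sum to $1$ because all first coordinates equal $1$); taking $\tilde{\v a}_k$ to be a vertex of the convex hull of the set $C$ of involved points shows $l^{(k)}_k\ge 0$, so the $k$-th coordinates of all the $\Bell^{(j)}$ are nonnegative and sum to zero, hence vanish, contradicting $\tilde{\v a}_k\in C$; therefore $C=\emptyset$. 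Nothing playing the role of this extremality argument appears in your proposal.

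Two smaller points. First, even your two-cone case has a sign slip: from $-\Bell\in L_j(\R)$ one gets $\ell_i\le 0$ and $\ell_j\ge 0$, i.e.\ the \emph{same} signs as from $\Bell\in L_i(\R)$, not the reversed ones; the conclusion there really uses $\tilde{\v a}_i\ne\tilde{\v a}_j$ (from $\ell_j(\tilde{\v a}_j-\tilde{\v a}_i)=\v 0$), again a geometric input rather than a sign cancellation, and indeed the lemma would be false if two of the $\v a_r$ coincided. Second, when you pass from $x\in L^\circ(\R)\cap(-L^\circ(\R))$ to the cancellation statement you should also note that each $L_i(\R)$ is itself pointed (trivial from its defining inequalities) in order to conclude $x=\v 0$; this is easy but should be said.
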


\begin{proof}
It suffices to show the following assertion. 
Let $\Bell^{(i)}\in L_{i}$ for $i=1,\ldots,N$. 
Then $\sum_{i=1}^N\Bell^{(i)}=\v 0$ implies that $\Bell^{(i)}=\v 0$ for each $i$.

Denote the coordinates of $\Bell^{(i)}$ by $l^{(i)}_k$.
Suppose that $\Bell^{(i)}\ne\v0$. 
Then $l^{(i)}_i<0$ and $l^{(i)}_k\ge0$ for all $k\ne i$. In particular 
\[
\tilde{\v a}_i=\sum_{k\ne i}-\frac{l^{(i)}_k}{l^{(i)}_i}\tilde{\v a}_k,
\]
so we see that $\tilde{\v a}_i$ is a 
(real) positive linear combination of some other $\tilde{\v a}_k$.
Define the set 
\[
C=\{\tilde{\v a}_k|\mbox{there exists $j$ such that $l^{(j)}_k\ne0$}\}.
\]
So $C$ is the set of $\tilde{\v a}_k$ that are non-trivially involved in some
relation $\Bell^{(j)}$.
Suppose $C$ is not empty. Let $\tilde{\v a}_k$ be a vertex of the convex hull of $C$.
Suppose that $l^{(k)}_k<0$.
Then $\tilde{\v a}_k$, being a positive linear combination of other $\tilde{\v a}_j\in C$
cannot be a vertex of the convex hull of $C$.
So $l^{(k)}_k\ge0$ and fortiori, $l^{(j)}_k\ge0$ for all $j$.
Their sum should be zero, contradicting the fact that $l^{(j)}_k\ne0$ for some
values of $j$. Hence we conclude that $C$ is empty. In particular $\Bell^{(j)}=\v 0$
for all $j$.
\end{proof}

Due to Lemma~\ref{direct-sum-cones-lemma} the set of formal 
power series supported in $L^\circ = L^\circ(\R) \cap \Z^N$ is a ring.
Let us denote this ring by  
\[
\sR = \{ \sum_{\Bell \in L^\circ} b_{\Bell} \v v^{\Bell} | b_{\Bell} \in \Z \}.
\]
We will also consider the bigger ring 
\[
\sS = \sR[v_1^{\pm 1},\ldots,v_N^{\pm 1}] . 
\]
Elements of $\sS$ are power series supported in a finite number of integral translations of the cone $L^\circ$.  It follows from Proposition~\ref{period-map-image} and formula~\eqref{p-ai-u} that $p_{\v a_i}(\v x^{\v u}f(\v x)^{-k}) \in (\prod_{r=1}^N v_r^{- \alpha_r}) \sR \subset \sS$. Note that when $A$ is saturated, this argument can be applied with any $k \ge 1$ and $\v u \in k\Delta$. 
With a bit more effort one can also show that $p_{\v a_i}(\v x^{\v u}f(\v x)^{-k}) \in \sS$ for any integral $\v u \in k \Delta$ without the assumption.
In what follows we shall not assume that $A$ is a saturated set. 

We shall be interested in the $N\times N$ matrix $\Psi$ with entries
\begin{equation}\label{uniform-series}
\Psi_{ji}=p_{\v a_i}(\omega_{\v a_j})=v_j^{-1}\left(\delta_{ij}+\sum_{\Bell \in L_i^*}\ell_j
\prod_{r=1}^N\frac{v_r^{\ell_r}}{\Gamma^*(\ell_r+1)}\right).
\end{equation}
This formula follows from (\ref{p-ai-u}) with $\v u=\v a_j$ and $k=1$.  It will be convenient to work with the renormalized series $\tilde{\Psi}_{ji} := v_j\Psi_{ji}\in \sR$. Let us now consider their truncated versions. Define for any $m \ge1$
the $N\times N$-matrix $\psi_m$ with entries
\[
(\psi_m)_{ji}=\mbox{ constant term of }\left(1-\left(1-\frac{f(\v x)}{v_i\v x^{\v a_i}}
\right)^m\right)\omega_{\v a_j}. 
\]
A straightforward calculation shows that this is equal to the series development
(\ref{expand-geometric}) with $k=1,\v u=\v a_j$ summed
over $m=0,1,2,\ldots,M-1$. Further calculation along the same lines as earlier
shows that we get
\be{vj-gamma-ji}
v_j(\psi_m)_{ji}=\delta_{ij}+\sum_{\Bell \in L_i(m)^*}\ell_j\prod_{k=1}^N
\frac{v_k^{\ell_k}}{\Gamma^*(\ell_k+1)},
\ee
where
\[L_i(m)=\{\Bell \in\Z^N|\sum_{k=1}^N \ell_k\tilde{\v a}_k=\v 0,
\ell_k\ge0\ \mbox{for all $k\ne i$ and }\ell_i>-m\}.
\]
Comparing~\eqref{vj-gamma-ji} and~\eqref{uniform-series} one sees that $(\tilde\psi_m)_{ji} := v_j (\psi_m)_{ji} \in \sR$ is the truncation of the element $\tilde{\Psi}_{ji} = v_j \Psi_{ji} \in \sR$. Let us consider the function $|\cdot|: L^\circ \to \Z_{\ge 0}$ given by
\[
|\Bell| := \sum_{k: \ell_k > 0} \ell_k = - \sum_{k: \ell_k < 0} \ell_k \text{ for } \Bell \in L^\circ
\] 
and define truncations of elements of $\sR$ by 
\[
r = \sum_{\Bell \in L^\circ} b_{\Bell} \v v^{\Bell} \quad \rightsquigarrow \quad r(m) := \sum_{|\Bell| \le m} b_{\Bell} \v v^{\Bell}
\]
for all $m \ge 0$. With this notation, the above computation shows that $\tilde\Psi (m) = \tilde \psi_m$. Note that the constant term of $\tilde\Psi$ is the identity matrix, and hence $\tilde\Psi$ and all its truncations $\tilde\psi_m$ are invertible over $\sR$.

\begin{theorem}\label{main5} Let $\mu \subseteq \Delta$ be an \emph{open} set  and denote $h=\#\mu_\Z$. Assume that $h \ge 1$ and $\#\{ j : \v a_j \in \mu \} = h$. Consider the $h \times h$ submatrices with entries in $\sR$ given by
\[
\tilde \Psi = (\tilde \Psi_{ji})_{\v a_j, \v a_i \in \mu},  
\]
where $\tilde\Psi_{ji}= v_j \Psi_{ji}$ are renormalized series~\eqref{uniform-series}. Let $\tilde\psi_m = \tilde \Psi(m)$ for $m \ge 1$ be the respective truncations. For the Frobenius lift $\sigma: \sR \to \sR$ that sends $v_j$ to $v_j^p$ for each $1 \le j \le N$ and any of the derivations $\delta = v_i \frac{\partial}{\partial v_i} : \sR \to \sR$ one has congruences   
\be{sigma-A-hyp-lim}
\tilde \Psi \cdot \sigma(\tilde \Psi)^{-1} \is \tilde \psi_{m} \cdot \sigma(\tilde \psi_{m/p})^{-1} \; \mod {p^{\ord_p(m)}}
\ee
and
\be{delta-A-hyp-lim}
\delta(\tilde \Psi) \cdot \tilde \Psi^{-1} \is  \delta(\tilde\psi_{m }) \cdot \tilde \psi_{m}^{-1} \quad \mod {p^{\ord_p(m)}} 
\ee
for all $m  \ge 1$.
\end{theorem}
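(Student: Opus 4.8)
The plan is to realize Theorem~\ref{main5} as an instance of the abstract machinery of Sections~\ref{sec:periods}--\ref{sec:truncations} applied to the A-hypergeometric polynomial $f(\v x) = \sum_i v_i \v x^{\v a_i}$, taking $R$ to be the $p$-adic completion of a suitable localization of $\Z_p[v_1,\ldots,v_N]$ and $S = \sR$ (or rather its $p$-adic completion). The first step is to identify the objects: by~\eqref{uniform-series}, the renormalized series $\tilde\Psi_{ji} = v_j \Psi_{ji} = v_j\, p_{\v a_i}(\omega_{\v a_j})$ are exactly the values of the period maps $p_{\v a_i}$ from Definition~\ref{p-v-period-map} on the basis differentials $\omega_{\v a_j}$, after multiplying the $i$-th column by $v_i$ (which corresponds to rescaling the dominant coefficient, hence changing $p_{\v v}$ only by a unit and commuting with everything relevant). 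Likewise, comparing~\eqref{vj-gamma-ji} with the matrices~\eqref{gamma-matrix} shows that $\tilde\psi_m = \tilde\Psi(m)$ is the matrix $\gamma_m(\mu)$ of periods mod~$m$ attached to the tuple $\phi_{\v a_i} = v_i$, again up to the column rescaling by $v_i$. So the whole statement is a matrix identity between $\v p_{\v v}$-type periods and their $\gamma_m$-truncations, one column at a time for $i$ ranging over $\{j : \v a_j \in \mu\}$.

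Next I would check the hypotheses needed to invoke Theorem~\ref{main3} and Corollary~\ref{main4}. The Hasse--Witt matrix $\beta_p(\mu)$ must be invertible over $R$; this is arranged by localizing at $\det\beta_p(\mu)$ — equivalently at $\det\gamma_p(\mu)$ by Lemma~\ref{beta-gamma-mod-p} — and the paper has already observed (just before the theorem) that the constant term of $\tilde\Psi$ is the identity, so $\det\tilde\Psi \equiv 1$ modulo the augmentation ideal and all truncations $\tilde\psi_m$ are invertible over $\sR$; in particular $\tilde\psi_p \equiv \gamma_p(\mu)$ is invertible after inverting its determinant, consistent with the localization. With $\beta_p(\mu)$ invertible, $Q_f(\mu)$ is free of rank $h$ with basis the $\omega_{\v a_j}$, $\v a_j \in \mu$, and the matrices $\Lambda_\sigma, N_\delta$ are defined. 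Theorem~\ref{main3} then gives $\v p_{\v v} = \Lambda_\sigma\,\sigma(\v p_{\v v})$ and $\delta(\v p_{\v v}) = N_\delta\,\v p_{\v v}$; assembling these over the $h$ relevant indices $i$ into the matrix $\tilde\Psi$ (tracking the $v_i$-rescaling through $\sigma$ and $\delta$, which is where a short bookkeeping computation is needed since $\sigma(v_i) = v_i^p$) yields $\tilde\Psi = \Lambda_\sigma\,\sigma(\tilde\Psi)$ and $\delta(\tilde\Psi) = N_\delta\,\tilde\Psi$, hence $\Lambda_\sigma = \tilde\Psi\cdot\sigma(\tilde\Psi)^{-1}$ and $N_\delta = \delta(\tilde\Psi)\cdot\tilde\Psi^{-1}$. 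Symmetrically, Corollary~\ref{main4} applied to the period maps $\rho_{m,v_i} - \rho_{m, v_i - f/\v x^{\v a_i}}$ gives $\gamma_m(\mu) \equiv \Lambda_\sigma\,\sigma(\gamma_{m/p}(\mu))$ and $\delta(\gamma_m(\mu)) \equiv N_\delta\,\gamma_m(\mu)$ modulo $p^{\ord_p(m)}$, i.e. $\tilde\psi_m \equiv \Lambda_\sigma\,\sigma(\tilde\psi_{m/p})$ and $\delta(\tilde\psi_m) \equiv N_\delta\,\tilde\psi_m$ after the same rescaling. Eliminating $\Lambda_\sigma$ between $\tilde\Psi = \Lambda_\sigma\,\sigma(\tilde\Psi)$ and $\tilde\psi_m \equiv \Lambda_\sigma\,\sigma(\tilde\psi_{m/p})$ — using that $\sigma(\tilde\psi_{m/p})$ is invertible over $\sR$ — produces~\eqref{sigma-A-hyp-lim}, and eliminating $N_\delta$ similarly produces~\eqref{delta-A-hyp-lim}.

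There is one genuine subtlety to address rather than wave away: the abstract results are stated over the $p$-adically complete ring $R$, whereas the conclusion is an identity in $\sR = \{\sum b_{\Bell}\v v^{\Bell} : b_{\Bell}\in\Z\}$, which carries the $L^\circ$-adic (i.e. $\v v$-adic) topology from Lemma~\ref{direct-sum-cones-lemma} rather than the $p$-adic one. The point is that the period maps $p_{\v a_i}$ of Section~\ref{sec:periods} take values in the $t$-adic-style completion $S$ — here the role of the auxiliary non-archimedean valuation is played by the grading $|\cdot|$ on $L^\circ$ — and the entries of $\tilde\Psi$ and $\tilde\psi_m$ all lie in $\sR$, with $\tilde\psi_m$ being literally the degree-$\le m$ truncation of $\tilde\Psi$. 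So the congruences~\eqref{sigma-A-hyp-lim}--\eqref{delta-A-hyp-lim}, which a priori hold modulo $p^{\ord_p(m)}$ with coefficients in the $p$-adic completion of $R\,\widehat\otimes\,\sR$, are in fact congruences between elements already known to lie in $\sR$, hence valid in $\sR/p^{\ord_p(m)}\sR$. One should also note that the case where $\mu$ contains an index $i$ with $\v a_i \notin \mu$ is excluded by the hypothesis $\#\{j : \v a_j \in \mu\} = h$, so that the $h$ columns indexed by $\v a_i \in \mu$ are precisely the ones for which $\omega_{\v a_i}$ is a basis element and $p_{\v a_i}$ is the associated period map — this is exactly what makes $\tilde\Psi$ square and the elimination of $\Lambda_\sigma$ and $N_\delta$ legitimate.

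I expect the main obstacle to be purely organizational rather than deep: carefully matching the three different-looking families of matrices ($\v p_{\v v}$-periods, the $\gamma_m$ of~\eqref{gamma-matrix}, and the A-hypergeometric $\Psi, \psi_m$), keeping track of the column-by-column $v_i$-rescaling as it passes through $\sigma$ and $\delta$, and verifying that the invertibility hypotheses of Theorem~\ref{main3} and Corollary~\ref{main4} are met on the nose after the localization at $\det\gamma_p(\mu)$. None of these is hard, but the identification in~\eqref{uniform-series}--\eqref{vj-gamma-ji} of $\tilde\Psi$ with the specialization $\phi_{\v a_i} = v_i$ of the general $\gamma$-construction must be spelled out, since that is the bridge between the concrete A-hypergeometric series and the abstract Dwork-crystal formalism on which the entire argument rests.
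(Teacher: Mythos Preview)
Your approach is sound and close to the paper's, with one structural difference and one bookkeeping slip. The paper does not invoke Theorem~\ref{main3}; it applies only Corollary~\ref{main4} to obtain $\gamma_m \equiv \Lambda_\sigma\,\sigma(\gamma_{m/p})$ and $\delta(\gamma_m)\equiv N_\delta\,\gamma_m$ modulo $p^{\ord_p(m)}$, rewrites via the explicit relation $\gamma_m = V^{-1}\tilde\psi_m V^m$ (with $V=\mathrm{diag}(v_j)$), and then recovers $\tilde\Psi\,\sigma(\tilde\Psi)^{-1}$ and $\delta(\tilde\Psi)\,\tilde\Psi^{-1}$ as the $p$-adic limits of the left-hand sides as $m=p^s\to\infty$, using only that $\tilde\psi_m=\tilde\Psi(m)$ and that the right-hand sides are independent of $m$. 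Your route via Theorem~\ref{main3} is equally valid and is in fact mentioned by the paper as an alternative just after Corollary~\ref{A-hyp-limits-corollary}; it is more conceptual but requires setting up each $p_{\v a_i}$ with its own auxiliary valuation and then checking that all values land in the common ring $\sS$. The bookkeeping slip: $\tilde\Psi_{ji}=v_j\Psi_{ji}$ is a \emph{row} rescaling, $\tilde\Psi=V\Psi$, and this does \emph{not} yield $\tilde\Psi=\Lambda_\sigma\,\sigma(\tilde\Psi)$ but rather $\tilde\Psi=(V\Lambda_\sigma V^{-p})\,\sigma(\tilde\Psi)$; on the truncation side $\gamma_m=V^{-1}\tilde\psi_m V^m$ produces the \emph{same} conjugate $V\Lambda_\sigma V^{-p}$, so your elimination step is unaffected and the theorem follows, but your stated intermediate identity $\Lambda_\sigma=\tilde\Psi\,\sigma(\tilde\Psi)^{-1}$ is off (compare Corollary~\ref{A-hyp-limits-corollary}: it is $\Lambda_\sigma=\Psi\,\sigma(\Psi)^{-1}$, without the tildes).
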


Let $V$ be the $h \times h$ diagonal matrix with the entries $v_j$ for $\v a_j \in \mu$. Note that substituting $\tilde\Psi = V \Psi$ and $\tilde\psi_m = V \psi_m$ into~\eqref{sigma-A-hyp-lim} and~\eqref {delta-A-hyp-lim} shows that these congruences are equivalent to
\[\bal
\Psi \cdot \sigma(\Psi)^{-1} &\is \psi_{m} \cdot \sigma(\psi_{m/p})^{-1} \; \mod {p^{\ord_p(m)}},\\
\delta(\Psi) \cdot \Psi^{-1} &\is  \delta(\psi_{m}) \cdot \psi_{m}^{-1} \quad \mod {p^{\ord_p(m)}}.
\eal\]
Matrices in the latter congruences have entries in the bigger ring~$\sS$. We preferred to state our theorem for the normalized matrices because truncations are more naturally defined on elements of $\sR$ rather than $\sS$. 

\begin{proof} Consider the matrices of periods mod $m$ given by~\eqref{gamma-matrix} with $\phi_{\v a_i} = v_i$:
\be{A-hyp-gamma}
(\gamma_m)_{j,i} = \text{ constant term of } (v_i^m - (v_i - f(\v x)/\v x^{\v a_i})^m) \frac{\v x^{\v a_j}}{f(\v x)} = v_i^m (\psi_m)_{ji}.
\ee
Their entries are in $\Z[v_1,\ldots,v_N]$ and we have $\gamma_m = V^{-1} \tilde\psi_m V^m$. It particular, the coefficient of the monomial $(\prod_{\v a_j \in \mu} v_j)^{p-1}$ in $\det(\gamma_p)$ is $1$. Let $R$ be the $p$-adic completion of $\Z[v_1^{\pm1},\ldots,v_N^{\pm 1},\det(\gamma_p)^{-1}]$. Since $\det(\gamma_p)$ is not divisible by $p$, this ring satisfies our assumption $\cap_{s \ge 1} p^s R = \{0\}$ and hence one can apply Corollary~\ref{main4}. It follows that there are matrices $\Lambda_\sigma, N_\delta \in R^{h \times h}$ such that
\be{corollary-8-A-hyp}
\gamma_{m}\is \Lambda_\sigma \sigma(\gamma_{m/p})\quad \mbox{and}
\quad \delta(\gamma_{m})\is N_\delta \gamma_{m}\quad \mod{p^{\ord_p(m)}}.
\ee
Observe that all matrices $\gamma_m$ are invertible over $\sS$ because 
\[
\det(\gamma_m)=(\prod_{\v a_j \in \mu} v_j)^{m-1} \det(\tilde \psi_m) \in  (\prod_{\v a_j \in \mu} v_j)^{m-1} \sR^\times \subset \sS^\times.
\]
One of the consequences of this fact is that $R$ is a subring of the $p$-adic completion
\[
S := \widehat \sS \subset \Z_p[[v_1^{\pm 1},\ldots,v_N^{\pm 1}]].
\]
Working in the big ring $S$ we can invert matrices in~\eqref{corollary-8-A-hyp} and conclude that
\[
\gamma_{m} \cdot \sigma(\gamma_{m/p})^{-1} \is \Lambda_\sigma \quad \mbox{and}
\quad \delta(\gamma_{m})\cdot \gamma_{m}^{-1}\is N_\delta \quad \mod{p^{\ord_p(m)}}.
\]  
Substituting $\gamma_m = V^{-1} \tilde\psi_m V^m$ in the left-hand sides yields
\be{normalized-A-hyp-congruences}\bal
\tilde\psi_{m} \cdot \sigma(\tilde \psi _{m/p})^{-1} &\is V \Lambda_\sigma V^{-p} \;\mod{p^{\ord_p(m)}}\\
\quad \delta(\tilde \psi_{m})\cdot \tilde \psi_{m}^{-1} &\is V N_\delta V^{-1}+ \delta(V) V^{-1} \;\mod{p^{\ord_p(m)}}.
&\eal\ee  
One particular consequence of these congruences is that the matrices in their right-hand sides have entries in $\sR$. Secondly, they must coincide with the limits of the left-hand sides which, using the fact that $\tilde\psi_m$ is a truncation of $\tilde \Psi$, immediately implies that 
\be{normalized-A-hyp-limits}
V \Lambda_\sigma V^{-p} = \tilde \Psi \cdot \sigma(\tilde \Psi)^{-1} \quad \mbox{ and } \quad V N_\delta V^{-1}+ \delta(V) V^{-1} = \delta(\tilde \Psi) \cdot \tilde \Psi^{-1}.
\ee
Substituting these values back into~\eqref{normalized-A-hyp-congruences} proves our theorem. 
\end{proof}

The above proof is based on the ideas from Section~\ref{sec:truncations}.
By Lemma~\ref{beta-gamma-mod-p} the Hasse--Witt matrix $\beta_p(\mu)$ is congruent modulo $p$ to the matrix $\gamma_p$ given in~\eqref{A-hyp-gamma}. (In the special case $\mu=\Delta^\circ$ this was observed in~\cite[Proposition 3.8]{AS16}.) Using this fact we can conclude from the above proof that under the assumptions of Theorem~\ref{main5} the determinant of the Hasse--Witt matrix is a polynomial not divisible by $p$ and there exist the respective matrices $\Lambda_\sigma, N_\delta \in R^{h \times h}$, where $R$ is the $p$-adic completion of the ring $\Z[v_1^{\pm1},\ldots,v_N^{\pm 1},\det(\beta_p(\mu))^{-1}]$. These are the same ring $R$ and the same matrices that were used in the proof. In particular, $R$ is a subring of the $p$-adic completion $S=\widehat{\sS}$ and we have

\begin{corollary}\label{A-hyp-limits-corollary} $\Lambda_\sigma = \Psi \cdot \sigma(\Psi)^{-1}$, $N_\delta = \delta(\Psi) \cdot  \Psi^{-1}$.    
\end{corollary}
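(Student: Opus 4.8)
The plan is to read the corollary off the two exact identities in~\eqref{normalized-A-hyp-limits}, established inside the proof of Theorem~\ref{main5}, by undoing the normalization $\tilde\Psi = V\Psi$. First I would pin down that the statement is meaningful: by Lemma~\ref{beta-gamma-mod-p} one has $\beta_p(\mu)\is\gamma_p\mod p$, and since the coefficient of $(\prod_{\v a_j\in\mu}v_j)^{p-1}$ in $\det(\gamma_p)$ equals $1$, the Laurent polynomial $\det(\beta_p(\mu))$ is not divisible by $p$. Hence $R=\Z[v_1^{\pm1},\ldots,v_N^{\pm1},\det(\beta_p(\mu))^{-1}]\;\widehat{}\;$ satisfies $\cap_{s}p^sR=\{0\}$; it is the very ring used in the proof of Theorem~\ref{main5}, it supports the matrices $\Lambda_\sigma,N_\delta\in R^{h\times h}$, and under the embedding $R\subset S=\widehat\sS$ these become comparable with $\Psi\cdot\sigma(\Psi)^{-1}$ and $\delta(\Psi)\cdot\Psi^{-1}$, which a priori lie in $S^{h\times h}$ because $\Psi=V^{-1}\tilde\Psi$ is invertible over $\sS$ (the constant term of $\tilde\Psi$ being the identity matrix).

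For the Frobenius identity I would use that the chosen lift satisfies $\sigma(v_j)=v_j^p$ for all $j$ and acts entrywise on matrices, so $\sigma(\tilde\Psi)=\sigma(V)\,\sigma(\Psi)=V^p\,\sigma(\Psi)$; together with $\tilde\Psi=V\Psi$ this gives $\tilde\Psi\cdot\sigma(\tilde\Psi)^{-1}=V\,\Psi\,\sigma(\Psi)^{-1}\,V^{-p}$. Comparing with the first identity $V\Lambda_\sigma V^{-p}=\tilde\Psi\cdot\sigma(\tilde\Psi)^{-1}$ of~\eqref{normalized-A-hyp-limits} and cancelling the invertible diagonal factors ($V$ on the left, $V^{-p}$ on the right) yields $\Lambda_\sigma=\Psi\cdot\sigma(\Psi)^{-1}$.

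For the derivation identity I would apply the Leibniz rule entrywise: $\delta(\tilde\Psi)=\delta(V\Psi)=\delta(V)\,\Psi+V\,\delta(\Psi)$, so $\delta(\tilde\Psi)\cdot\tilde\Psi^{-1}=\delta(V)V^{-1}+V\,\delta(\Psi)\,\Psi^{-1}\,V^{-1}$. Substituting this into the second identity $VN_\delta V^{-1}+\delta(V)V^{-1}=\delta(\tilde\Psi)\cdot\tilde\Psi^{-1}$ of~\eqref{normalized-A-hyp-limits}, the terms $\delta(V)V^{-1}$ cancel, leaving $VN_\delta V^{-1}=V\,\delta(\Psi)\,\Psi^{-1}\,V^{-1}$, and cancelling $V$ gives $N_\delta=\delta(\Psi)\cdot\Psi^{-1}$.

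I do not expect a genuine obstacle at this stage: the substantive point --- that~\eqref{normalized-A-hyp-limits} holds as honest equalities rather than mere congruences --- has already been dealt with in the proof of Theorem~\ref{main5}, where it follows from $\tilde\psi_m$ being the $|\cdot|$-truncation $\tilde\Psi(m)$, so that the $p$-adically convergent sequences $\tilde\psi_m\,\sigma(\tilde\psi_{m/p})^{-1}$ and $\delta(\tilde\psi_m)\,\tilde\psi_m^{-1}$, read along $m=p^s$, also converge coefficientwise to $\tilde\Psi\,\sigma(\tilde\Psi)^{-1}$ and $\delta(\tilde\Psi)\,\tilde\Psi^{-1}$. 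For the corollary itself it therefore suffices to invoke~\eqref{normalized-A-hyp-limits} and perform the two one-line matrix manipulations above.
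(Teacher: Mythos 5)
Your proposal is correct and matches the paper's own argument, which proves the corollary precisely by substituting $\tilde\Psi = V\Psi$ into~\eqref{normalized-A-hyp-limits} and cancelling the diagonal factors, with the real work already done in the proof of Theorem~\ref{main5}. The extra details you supply (that $\sigma(V)=V^p$, the Leibniz computation, and that $\det\beta_p(\mu)$ is not divisible by $p$ so the ring $R$ and the matrices $\Lambda_\sigma, N_\delta$ exist) are exactly the points the paper makes in the paragraph preceding the corollary.
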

\begin{proof} Substitute $\tilde\Psi = V \Psi$ into~\eqref{normalized-A-hyp-limits}. 
\end{proof}

A special consequence of this corollary is that the matrices $V\Lambda_\sigma V^{-p}$ and $VN_\delta V^{-1}
+\delta(V) V^{-1}$ have their entries in $\sR$. Furthermore, it turns out that $N_\delta$
and, in a lesser way, $\Lambda_\sigma$, are independent of the choice of $p$.

Finally, we remark that in fact there are well defined period maps
\[
p_{\v a_i}: \widehat\Omega_f \to S.
\]
As we explained in Section~\ref{sec:periods}, these period maps are invariant under the Cartier operator (we have $p_{\v a_i} = p^\sigma_{\v a_i} \circ \cartier$ where $p^\sigma_{\v a_i}$ denotes the respective period map $\widehat{\Omega}_{f^\sigma} \to S$ ) and vanish on formal derivatives. Corollary~\ref{A-hyp-limits-corollary} is then a direct consequence of Theorem~\ref{main3}. 

Let us also mention the main result of~\cite{AS13}, Theorem 1.4. It states that in the A-hypergeometric setting with the assumption that $\Delta$ has $\v a_0$ as its unique interior lattice point the series $\Phi(\v v)/\Phi(\v v^p)$, where $\Phi(\v v)=\Psi_{00}(v_0,\ldots,v_N)$ is the unique entry of our matrix $\Psi$ for $\mu=\Delta^\circ$, is a $p$-adic analytic element with the set of poles determined by the Hasse invariant $\beta_p(\Delta^\circ)$. Hence~\cite[Theorem 1.4]{AS13} follows from Corollary~\ref{A-hyp-limits-corollary}.

\section{Example}
We continue the example from Part I, Section 7 with
\[
f(x,y)=v_1y^2+v_2x+v_3x^3+v_4x^2+v_5xy.
\]
We determine the entries of the matrix $\tilde{\Psi}$.
The vectors $\tilde{\v a}_k$ are given by the columns of
\[
\begin{pmatrix}
1 & 1 & 1 & 1 & 1\\
0 & 1 & 3 & 2 & 1\\
2 & 0 & 0 & 0 & 1
\end{pmatrix}.
\]
The supports $L_i$ lie in the null space of this matrix which can be written as
\[
(r+2s,s,s,r,-2r-4s),\qquad r,s\in\Z.
\] 
In $L_1$ we have the inequalities $s,r,-2r-4s\ge0$. This is only possible 
when $r=s=0$. The only non-trivial series $\Psi_{j,1}$ is $v_1\Psi_{1,1}=1$.

In $L_2$ we have the inequalities $r+2s,s,r,-2r-4s\ge0$ and we find $v_2\Psi_{2,2}=1$
as non-trivial series.

In $L_3$ we again get $v_3\Psi_{3,3}$ as only non-trivial $\Psi_{j,3}$.

In $L_4$ we have the inequalities $r+2s,s,-2r-4s\ge0$. Hence $r=-2s,s\ge0$. So we get
\[
v_j\Psi_{j,4}=\delta_{j,4}-\sum_{s\ge1}m_j(s)\frac{(2s-1)!}{s!s!}(v_2v_3/v_4^2)^s,
\]
where $m_j(s)$ is the $j$-th component of $(0,s,s,-2s,0)$. The $m$-truncated version
has the extra condition $m_4(s)=-2s>-m$, hence $s<m/2$.

In $L_5$ we have the inequalities $r+2s,s,r\ge0$. So we get
\[
v_j\Psi_{j,5}=\delta_{j,5}-\sum_{r,s\ge0}m_j(r,s)\frac{(2r+4s-1)!}{(r+2s)!s!s!r!}
(v_1v_4/v_5^2)^r(v_1^2v_2v_3/v_5^4)^s,
\]
where $m_j(r,s)$ is the $j$-th component of $(r+2s,s,s,r,-2r-4s)$. The $m$-truncated
version has the extra condition $m_5(r,s)=-2r-4s>-m$, hence $r+2s<m/2$. 

If we restrict our matrix to the index set $\Delta^\circ_\Z$ a computation shows
that we get the $1\times1$-matrix with element
\[
v_5\Psi_{5,5}=\sum_{r,s\ge0}\frac{(2r+4s)!}{(r+2s)!s!s!r!}x^ry^s=\frac{1}{\sqrt{1-4x}}
F\left(1/4,3/4,1\left|\frac{64y}{(1-4x)^2}\right.\right)
\]
where $x=v_1v_4/v_5^2, y=v_1^2v_2v_3/v_5^4$. The other components $v_j\Psi_{j,5}$ are
not so easy to express in terms of one-variable hypergeometric functions, if possible at all.

\end{document}